\newdimen\AAdi%
\newbox\AAbo%
\def\AAk#1#2{\s_etbox\AAbo=\hbox{#2}\AAdi=\wd\AAbo\kern#1\AAdi{}}%
\def\AAr#1#2#3{\s_etbox\AAbo=\hbox{#2}\AAdi=\ht\AAbo\raise#1\AAdi\hbox{#3}}%
\font\tenmsb=msbm10 at 12pt
\font\sevenmsb=msbm7 at 8pt
\font\fivemsb=msbm5 at 6pt
\def\Bbb#1{{\tenmsb\fam\msbfam#1}}
\newcommand{\beq}{\begin{equation}}
\newcommand{\eeq}{\end{equation}}
\newcommand{\beqr}{\begin{eqnarray}}
\newcommand{\eeqr}{\end{eqnarray}}
\newcommand{\ba}{\begin{array}}
\newcommand{\ea}{\end{array}}
\begin{document}

\newtheorem{thm}{Theorem}[section]
\newtheorem{lem}{Lemma}[section]
\newtheorem{cor}{Corollary}[section]
\newtheorem{rem}{Remark}
\newtheorem{pro}{Proposition}[section]
\newtheorem{defi}{Definition}
\newtheorem{eg}{Example}
\newtheorem*{claim}{Claim}
\newtheorem{conj}[thm]{Conjecture}
\newcommand{\noi}{\noindent}
\newcommand{\dis}{\displaystyle}
\newcommand{\mint}{-\!\!\!\!\!\!\int}
\numberwithin{equation}{section}

\def \bx{\hspace{2.5mm}\rule{2.5mm}{2.5mm}}
\def \vs{\vspace*{0.2cm}}
\def\hs{\hspace*{0.6cm}}
\def \ds{\displaystyle}
\def \p{\partial}
\def \O{\Omega}
\def \o{\omega}
\def \b{\beta}
\def \m{\mu}
\def \l{\lambda}
\def\L{\Lambda}
\def \ul{u_\lambda}
\def \D{\Delta}
\def \d{\delta}
\def \k{\kappa}
\def \s{\sigma}
\def \e{\varepsilon}
\def \a{\alpha}
\def \sm{\sigma}
\def \tf{\tilde{f}}
\def\cqfd{%
\mbox{ }%
\nolinebreak%
\hfill%
\rule{2mm} {2mm}%
\medbreak%
\par%
}
\def \pr {\noindent {\it Proof.} }
\def \rmk {\noindent {\it Remark} }
\def \esp {\hspace{4mm}}
\def \dsp {\hspace{2mm}}
\def \ssp {\hspace{1mm}}

\def\la{\langle}\def\ra{\rangle}

\def \u{u_+^{p^*}}
\def \ui{(u_+)^{p^*+1}}
\def \ul{(u^k)_+^{p^*}}
\def \energy{\int_{\R^n}\u }
\def \sk{\s_k}
\def \mo{\mu_k}
\def\cal{\mathcal}
\def \I{{\cal I}}
\def \J{{\cal J}}
\def \K{{\cal K}}
\def \OM{\overline{M}}

\def\n{\nabla}

\def\fk{{{\cal F}}_k}
\def\M1{{{\cal M}}_1}
\def\Fk{{\cal F}_k}
\def\Fl{{\cal F}_l}
\def\FF{\cal F}
\def\Gk{{\Gamma_k^+}}
\def\n{\nabla}
\def\uuu{{\n ^2 u+du\otimes du-\frac {|\n u|^2} 2 g_0+S_{g_0}}}
\def\uuug{{\n ^2 u+du\otimes du-\frac {|\n u|^2} 2 g+S_{g}}}
\def\sku{\sk\left(\uuu\right)}
\def\qed{\cqfd}
\def\vvv{{\frac{\n ^2 v} v -\frac {|\n v|^2} {2v^2} g_0+S_{g_0}}}
\def\vvs{{\frac{\n ^2 \tilde v} {\tilde v}
 -\frac {|\n \tilde v|^2} {2\tilde v^2} g_{S^n}+S_{g_{S^n}}}}
\def\skv{\sk\left(\vvv\right)}
\def\tr{\hbox{tr}}
\def\pO{\partial \Omega}
\def\dist{\hbox{dist}}
\def\RR{\Bbb R}\def\R{\Bbb R}
\def\C{\Bbb C}
\def\B{\Bbb B}
\def\N{\Bbb N}
\def\Q{\Bbb Q}
\def\Z{\Bbb Z}
\def\PP{\Bbb P}
\def\EE{\Bbb E}
\def\F{\Bbb F}
\def\G{\Bbb G}
\def\H{\Bbb H}
\def\SS{\Bbb S}\def\S{\Bbb S}

\def\div{\hbox{div}\,}

\def\lcf{{locally conformally flat} }

\def\circledwedge{\setbox0=\hbox{$\bigcirc$}\relax \mathbin {\hbox
to0pt{\raise.5pt\hbox to\wd0{\hfil $\wedge$\hfil}\hss}\box0 }}
%\ignorespaces%

\def\sss{\frac{\s_2}{\s_1}}

\date{\today}

\title[ ancient caloric functions ]{ Space of ancient caloric functions on some manifolds beyond volume doubling}

\author{} %Fanghua Lin,  Hongbing Qiu, Jun Sun and Qi S. Zhang

\author[Lin]{Fanghua Lin}
\address{Courant Institute\\ 251 Mercer St.\\New York, NY 10012 USA
 }
 \email{linf@cims.nyu.edu}

\author[Qiu]{Hongbing Qiu}
\address{School of Mathematics and Statistics\\ Wuhan University\\Wuhan 430072,
China
 }
 \email{hbqiu@whu.edu.cn}

 \author[Sun]{Jun Sun}
\address{School of Mathematics and Statistics\\ Wuhan University\\Wuhan 430072,
China
 }
 \email{sunjun@whu.edu.cn}

 \author[Zhang]{Qi S. Zhang}
\address{Department of Mathematics  \\ University of California, Riverside\\Riverside, CA 92521
USA
 }
 \email{qizhang@math.ucr.edu}

\begin{abstract}
Under a condition that breaks the volume doubling barrier, we obtain a time polynomial structure result on the space of ancient caloric functions with polynomial growth on manifolds.
 As a byproduct, it is shown that the finiteness result for the space of harmonic functions with polynomial growth on manifolds in \cite{CM97} and \cite{Li97} are essentially sharp,  except for the multi-end cases,  addressing an issue raised in \cite{CM98}  and removing all {\it local} topological or geometric conditions on the manifold with respect to a reference point.

\vskip12pt

\noindent{\it Keywords and phrases}: space of caloric ancient solutions, non volume doubling manifolds, finiteness of dimension.

\noindent {\it MSC 2020}: 58J35, 53C44.

\end{abstract}
\maketitle
\section{Introduction}

\allowdisplaybreaks

\allowdisplaybreaks

Let $M$ be a complete, noncompact $n$-dimensional Riemannian manifold.
The study of the space of harmonic and caloric functions of polynomial growth on $M$ has been active over decades. A brief summary of these research is hereby presented.
The standard assumptions on $M$ are the volume doubling condition and the mean value inequality for solutions of the Laplace or the heat equation. Under these assumptions, Colding and Minicozzi (\cite{CM97})  proved that the space of harmonic functions of a fixed polynomial growth is finite dimensional, answering a  question of Yau (\cite{Yau}),  originally asked for the more special manifolds of nonnegative Ricci curvatures.  See also \cite{Li97} by P. Li. In \cite{LW00}, and \cite{LW01} Li and Wang further treated this question for divergence and respectively, non-divergence form uniformly elliptic operators with Lipschitz coefficients in $\mathbb{R}^n$.  In \cite{HL99}, Han and Lin extended the finite dimensionality to solutions of certain elliptic equations in a strip, allowing exponential growth of fixed order. These works extended and added on earlier results by  Avellaneda and Lin \cite{AL89},  Li-Tam \cite{LT89},  Moser-Struwe \cite{MS92} and also Lin \cite{Ln96}, Colding and Minicozzi \cite{CMjdg97} where this problem under various settings, including periodic ones, were studied by a variety of methods such as homogenization and $\Gamma$ convergence. The growth of harmonic functions near infinity is also related to the boundary behavior of Riemann mapping as demonstrated in \cite{LN85}.
On complex manifolds, it was also asked in by Yau in \cite{Yau} whether or not the dimension of the spaces of holomorphic functions of a polynomial growth is bounded from above by the dimension of the corresponding spaces of polynomials on the complex Euclidean space. In this regard, solutions have been constructed by L. Ni \cite{Ni04} and Chen-Fu-Yin-Zhu \cite{CFYZ}.  In addition, this is a step in the uniformization of a complete K\"ahler manifold $M$ with positive bisectional curvature by Siu and Yau. Extensions of the finite dimensionality result has also been made to the discrete setting in Kleiner (\cite{Kle10}), who  used a similar idea to  give a new proof of a fundamental result in geometric group theory by Gromov \cite{Gro}. See also \cite{HJL15} by Hua, Jost and Liu for graphs.

For the case of the heat equation,
 in \cite{LZ19}, it is proven by Lin and Zhang that ancient caloric functions of polynomial growth of a fixed degree $d$ are polynomials of time. It is also shown that  $\mathbf{K}^d$, the space of such functions,  is finite dimensional and a non-sharp estimate of the dimension is given using a method inspired by \cite{CM97} and \cite{Li97}. See also discussions at the end of the paper.  In \cite{CM21} based on the time polynomial result in \cite{LZ19}, a sharp dimension estimate  for  $\mathbf{K}^d$ is derived.
Colding and Minicozzi also derived a sharp dimension estimate for the space of polynomial growth caloric functions on a solution to the ancient mean curvature flow with a bounded entropy (\cite{CM20}). As an application, they showed that each slice of the ancient mean curvature flow was contained in some Euclidean space with dimension depending only on the dimension of the submanifold and the upper bound of the entropy, while independent of the codimension of the flow. Similar results are proved for self-shrinkers of the mean curvature flow. On the other hand, Calle (\cite{Ca06}) proved that each slice of the ancient mean curvature flow was contained in an affine subspace with dimension bounded in terms of the density and the dimension of the evolving submanifold.

  It is natural and desirable to extend these results beyond the typical volume doubling property, in either positive or negative (by counter-examples) directions. For the space of harmonic functions, such an issue was raised in \cite{CM98} p117 and p118, line 2.  Let us also mention two other examples. One is the work of Anderson \cite{An83}, Anderson-Schoen \cite{AS85} and D. Sullivan \cite{Su83} on non-trivial bounded harmonic functions in hyperbolic manifolds. The other is the extension of some singular integrals to the non-homogeneous spaces by Nazarov, Treil and Volberg \cite{NTV03}.

The volume doubling condition, however, is used in  all the above mentioned finite dimensionality results, and it is necessary in general. For example $\mathbf{K}^d$ is infinite dimensional on the standard hyperbolic space $\mathbb H^n$ since even the space of bounded harmonic functions is of infinite dimensional, c.f. \cite{An83}, \cite{Su83}.
Nevertheless, in this paper, we manage to break the volume doubling barrier and prove the time polynomial structure result of $\mathbf{K}^d$ under much more relaxed conditions: for  any given positive parameters $\a, \e, \d, m$ such that $\e>\frac{27}{2}\d+14$,
\begin{equation}\label{eqn-BA}
\begin{cases}\aligned
1. \quad   \frac{|B(x, r_2)|}{|B(x, r_1)|} \leq & C(1+r_2)^\a \cdot e^{\frac{1}{4+\e}\cdot \frac{r_2^{2}}{r_1^{2}}},\quad  \forall x \in M, \quad r_2 \ge r_1>0;\\
2. \quad G(x, t, y)\leq & \frac{C(1+t)^m e^{-\frac{d^2(x, y)}{(4+\d)t}}}{\sqrt{|B(x, \sqrt{t})|}\cdot \sqrt{|B(y, \sqrt{t})|}}, \quad \forall x, y \in M, \quad t>0.
\endaligned
\end{cases}
\end{equation} Here $G=G(x, t, y)$ is the heat kernel on $M$ and $B(x, r)$ is the geodesic ball centered at $x$ with radius $r$ and $|B(x, r)|$ is the volume of $B(x, r)$.
It is known from  \cite {CG98} and \cite{LW99} that under volume doubling condition ($\a=0$ here), the Gaussian upper bound ($m=0$ here) is equivalent to the mean value inequality for the heat equation.  See also the remark after Theorem \ref{thm-2}. As mentioned, volume doubling and mean value inequality together is the current standard condition for the finite dimensionality results. So Condition \eqref{eqn-BA} is much more general and is amount to extending the standard condition by any polynomial weight.

 It is also well known that the Gaussian upper bound with $m=0$ holds for a wide class of manifolds without the volume doubling condition. For example on the hyperbolic space $\mathbb H^n$ and, more generally on manifolds with Ricci curvature bounded from below by a negative constant and with the bottom of the $L^2$ spectrum being positive. See Theorem 13.4 in \cite{Lib} e.g. Our extra exponent $m$ only makes the condition even broader.
 In fact by Theorem 1.1 and example (1.6) in \cite{BCG01}, if the manifold has local bounded geometry in the sense that the  injectivity radius is bounded from below by a positive constant and the Ricci curvature is bounded from below by a negative constant, then the heat kernel upper bound with suitable $m$ in Condition (\ref{eqn-BA}) follows merely from a polynomial  bounds of the volume of large geodesic balls $C r^{D_2} \ge |B(x, r)| \ge c r^{D_1}$, $\forall r \ge 1$, and some constants $D_2>D_1>0$. Indeed, that theorem implies the on diagonal upper bound $G(x, t, x) \le C t^{-D_1/(D_1+1)}, \, t \ge 1$. See also Theorem 3.1 in the same paper where conditions can be relaxed further to replace the injectivity radius and Ricci curvature lower bound by local volume doubling and local Poincar\'e inequality.  The full upper bound for the heat kernel with sufficiently large $m$ follows automatically from the general Theorem 1.1 in \cite{Gri97}. So our condition on the heat kernel essentially cover the vast majority of typical manifolds of polynomial volume growth.

  The following is a concrete example of non-doubling manifold satisfying Condition (\ref{eqn-BA}).
Let $M$ be the connected sum of $\mathbb{R}^2$ with a half cylinder, i.e.   $\mathbb{R}^2$ with a hole where the semi-infinite cylinder $S^1 \times \mathbb{R}^+$ is attached along the boundary of the hole and smoothed out. In \cite{GIS} the authors proved, among other results,  the upper bounds on the heat kernel:
\[
G(x, t, y) \le C\frac{\ln(e+t)}{t}  e^{-c d(x, y)^2/t},  \forall t>0,
\]which is qualitatively sharp since a comparable lower bound with perhaps different constants was also proven there. c.f. example 2.11 (Figure 2) there. It is clear that this manifold is not volume doubling in large scale. However it is covered by Condition \eqref{eqn-BA} with $\alpha=1$ and any $m>0$. Many other classes of manifolds for which Condition (\ref{eqn-BA}) holds can be found in the same paper.

 The following theorem and Theorem \ref{thm-ys} below are the main results of the paper.

\begin{thm}
\label{thm-2}

Let $M$ be a complete $n$-dimensional noncompact Riemannian manifold satisfying Condition (\ref{eqn-BA}), with a reference point $0$  and  $\e>\frac{27}{2}\d+14$.

(a). Let $u$ be an ancient solution to the heat equation (\ref{eqn-H1}) with polynomial growth of degree at most $d$, namely, for any $(x, t)\in M \times (-\infty, 0]$
\begin{equation}\label{eqn-HA26}\aligned
\left| u(x, t) \right| \leq C\left( 1+d^2(x, 0) + |t| \right)^{\frac{d}{2}}.
\endaligned
\end{equation}
Let $k$ be the least integer greater than $2m+  \frac{5}{2}\a +\frac{d}{2}$. Then $u$ is a polynomial of $t$ of degree of at most $k$.

(b).  Let $\mathbf{K}^d$ be the space of ancient solutions in part (a). Then $\mathbf{K}^d$ is finite dimensional if and only if the corresponding space of harmonic functions is finite dimensional.
\end{thm}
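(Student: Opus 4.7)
The plan is to prove part (a)---that $u$ is a polynomial in $t$ of degree at most $k$---by showing $\partial_{t}^{k+1}u\equiv 0$, and then to deduce part (b) as an essentially algebraic consequence of (a).

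For part (a), since $u$ is ancient caloric, so is each time derivative $\partial_{t}^{j}u=\Delta^{j}u$, and the reproducing property of the heat semigroup (justified for polynomial-growth solutions by the Gaussian bound in Condition (\ref{eqn-BA})(2)) gives
\begin{equation*}
\partial_{t}^{j}u(x,0)=\int_{M}(\partial_{t}^{j}G)(x,-s,y)\,u(y,s)\,dy,\qquad s<0.
\end{equation*}
Analyticity of the heat semigroup in time, via Cauchy's integral formula on a complex disk of radius $|s|/2$ about $-s$, upgrades Condition (\ref{eqn-BA})(2) to an estimate on $\partial_{t}^{j}G$ of the form $\tfrac{C_{j}}{|s|^{j}}$ times the same kernel with $\delta$ replaced by a slightly larger $\delta'$. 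I would then substitute (\ref{eqn-HA26}) for $|u(y,s)|$; convert $|B(y,\sqrt{|s|})|^{-1/2}$ into $|B(x,\sqrt{|s|})|^{-1/2}$ via Condition (\ref{eqn-BA})(1) applied to the inclusion $B(x,\sqrt{|s|})\subset B(y,d(x,y)+\sqrt{|s|})$, at the cost of a polynomial factor $(1+d(x,y)+\sqrt{|s|})^{\alpha/2}$ and a Gaussian factor of exponent $\tfrac{(d(x,y)+\sqrt{|s|})^{2}}{2(4+\varepsilon)|s|}$; and split the $y$-integral into dyadic annuli $d(x,y)\in[2^{\ell}\sqrt{|s|},2^{\ell+1}\sqrt{|s|}]$, bounding the volume of each annulus by Condition (\ref{eqn-BA})(1) one more time. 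The hypothesis $\varepsilon>\tfrac{27}{2}\delta+14$ guarantees that the resulting exponent in $d(x,y)$ stays strictly negative, so the dyadic sum converges absolutely. Tracking the polynomial powers of $|s|$ (from $(1+t)^{m}$, from the extra $(1+\cdot)^{\alpha/2}$, from the annular volumes, and from the triangle-inequality bound on $(1+d^{2}(y,0)+|s|)^{d/2}$), I expect the estimate
\begin{equation*}
|\partial_{t}^{k+1}u(x,0)|\le C(1+d^{2}(x,0))^{d/2}\,|s|^{-(k+1)+2m+\tfrac{5}{2}\alpha+\tfrac{d}{2}}.
\end{equation*}
By the definition of $k$ this exponent is strictly negative, so letting $|s|\to\infty$ forces $\partial_{t}^{k+1}u(x,0)\equiv 0$; time translation then gives $\partial_{t}^{k+1}u\equiv 0$ on $M\times(-\infty,0]$.

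The main obstacle is the delicate numerical balance between the Gaussian growth factor $e^{r_{2}^{2}/((4+\varepsilon)r_{1}^{2})}$ in (\ref{eqn-BA})(1) and the Gaussian decay of the time-differentiated heat kernel. Three losses must be simultaneously absorbed: the Cauchy-integral step in time shifts $\delta$ to some $\delta'>\delta$; passing to the geometric mean $\sqrt{|B(x,\sqrt{t})|\,|B(y,\sqrt{t})|}$ in the denominator effectively halves the margin available in one Gaussian exponent; and closing the dyadic annular sum requires a strictly positive residual gap. The explicit constraint $\varepsilon>\tfrac{27}{2}\delta+14$ is the cumulative bookkeeping of these three degradations.

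For part (b), write $u(x,t)=\sum_{j=0}^{k}a_{j}(x)t^{j}$ from (a). Matching coefficients in $\partial_{t}u=\Delta u$ forces $a_{j}=\Delta^{j}a_{0}/j!$, so $u$ is determined by $a_{0}$ and $\Delta^{k+1}a_{0}=0$. Fixing $x$ and letting $|t|\to\infty$ in (\ref{eqn-HA26}) annihilates $a_{j}$ whenever $2j>d$, and each surviving $a_{j}$ inherits polynomial growth of degree at most $d-2j$. Hence $\mathbf{K}^{d}$ is linearly isomorphic to the space of polyharmonic functions of polynomial growth of degree at most $d$. A finite induction on polyharmonic order, using the short exact sequence induced by $\Delta$ between successive polyharmonic spaces (whose kernels are spaces of polynomial-growth harmonic functions of suitable degree), shows this polyharmonic space is finite dimensional if and only if each corresponding space of harmonic functions of polynomial growth of degree at most $d$ is. The reverse implication is immediate, since every such harmonic function embeds in $\mathbf{K}^{d}$ as a time-independent element.
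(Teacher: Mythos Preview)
Your overall architecture for part (a)---differentiate the reproducing formula $u(x,0)=\int G(x,|s|,y)\,u(y,s)\,dy$ in time, bound $\partial_t^{j}G$ pointwise by a Gaussian with an extra factor $|s|^{-j}$, integrate against the growth of $u$ over dyadic annuli using Condition \eqref{eqn-BA}(1), and send $|s|\to\infty$---matches the paper's Step 3. The substantive difference is how the pointwise bound on $\partial_t^{j}G$ is obtained. The paper does \emph{not} appeal to semigroup analyticity; it first proves a polynomially weighted mean value inequality (Theorem \ref{thmmmvi}) from the heat kernel bound, and then combines it with a parabolic Caccioppoli iteration (inequalities \eqref{eqn-HA4}--\eqref{eqn-HA19}) to pass from an $L^{2}$ bound on $G$ in a cube to a pointwise bound on $\partial_t^{k}G$, yielding \eqref{eqn-HA25a}. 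This route is self-contained from Condition \eqref{eqn-BA} and produces the explicit losses $\delta\mapsto 2\delta$ and the power $2m+\tfrac{7}{4}\alpha$ that feed into the final exponent $2m+\tfrac{5}{2}\alpha$; your ``I expect'' for the bookkeeping would have to be redone in your framework.

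Your Cauchy-formula step, as written, has a real gap. Cauchy's integral formula on the disk $|z-|s||=|s|/2$ requires a \emph{pointwise} bound on $G(x,z,y)$ for complex $z$, and Condition \eqref{eqn-BA}(2) gives nothing for complex time. Using only the (free) $L^{2}$-analyticity of $e^{z\Delta}$ and Cauchy--Schwarz, the best you get on that circle is $|G(x,z,y)|\le G(x,\mathrm{Re}\,z,x)^{1/2}G(y,\mathrm{Re}\,z,y)^{1/2}$, i.e.\ the on-diagonal factor with \emph{no} off-diagonal Gaussian decay; that is not enough to make your $y$-integral converge. Upgrading the real-time Gaussian bound to one valid in a complex sector (so that Cauchy then gives your ``$\delta\to\delta'$'' derivative bound) is itself a nontrivial theorem of Davies/Coulhon--Sikora type, proved via Davies' perturbation and Phragm\'en--Lindel\"of, not a one-line Cauchy argument. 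You should either cite such a result and verify it tolerates the extra $(1+t)^{m}$ weight, or follow the paper's mean-value-plus-Caccioppoli approach.

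For part (b) your argument is correct and essentially equivalent to the paper's: you read off polynomial growth of each coefficient $a_{j}$ directly from \eqref{eqn-HA26} (a Vandermonde/scaling argument), whereas the paper invokes the modified mean value inequality once more; the reduction to harmonic spaces via the chain $(j+1)a_{j+1}=\Delta a_{j}$ is the same in both.
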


Using Theorem 3.1 in \cite{BCG01}, as discussed before the statement of the theorem, we obtain the following corollary with a set of  clear curvature-volume conditions, which is not in the most general form, but which is intended to elucidate the theorem.

\begin{cor}
The conclusions of the theorem are true if the manifold $(M, g)$ has Ricci curvature bounded from below by a constant $-K$ and the volume condition holds:
\[
C r^{D_2} \ge |B(x, r)| \ge c r^{D_1}, \forall r \ge 1, \, x \in M,
\]for some constants $D_2>D_1>0, c, C>0$.
\end{cor}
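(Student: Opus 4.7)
The plan is to reduce the Corollary to Theorem~\ref{thm-2} by verifying that the Ricci lower bound and the two-sided polynomial volume control together imply Condition~(\ref{eqn-BA}) for some admissible quadruple $(\alpha,\delta,\epsilon,m)$.

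For the volume ratio in the first line of (\ref{eqn-BA}), I would argue by cases on the sizes of $r_1\le r_2$. When $r_1\ge 1$, the two-sided polynomial bound gives $|B(x,r_2)|/|B(x,r_1)|\le (C/c)\,r_2^{D_2}$ directly, which fits in (\ref{eqn-BA})(1) with $\alpha=D_2$ and the exponential factor unused. When $r_2\le 1$, Bishop--Gromov comparison with $\mathrm{Ric}\ge -K$ yields $|B(x,r_2)|/|B(x,r_1)|\le C_{n,K}(r_2/r_1)^n$, and setting $t=r_2/r_1$ the elementary inequality $t^n\le C(\epsilon,n)\,e^{t^2/(4+\epsilon)}$ shows this is absorbed by $\exp(r_2^2/((4+\epsilon)r_1^2))$ for any fixed $\epsilon>0$. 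The intermediate case $r_1<1\le r_2$ is handled by factoring through the pivot $r=1$: the polynomial lower bound gives $|B(x,1)|\ge c>0$, the two previous estimates chain to $|B(x,r_2)|/|B(x,r_1)|\le C r_2^{D_2}/r_1^n$, and the super-algebraic gain from $\exp(r_2^2/((4+\epsilon)r_1^2))\ge \exp(1/((4+\epsilon)r_1^2))$ again absorbs the $r_1^{-n}$ factor.

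For the Gaussian heat kernel bound in the second line of (\ref{eqn-BA}), the plan is to invoke Theorem~3.1 of \cite{BCG01}, as flagged in the discussion preceding the Corollary. The hypothesis $\mathrm{Ric}\ge -K$ supplies both local (small-scale) volume doubling via Bishop--Gromov and a local Poincar\'e inequality via Buser, which are precisely the local inputs required by that theorem; the polynomial lower bound $|B(x,r)|\ge c r^{D_1}$ for $r\ge 1$ supplies the large-scale input. The output is an on-diagonal estimate of the form $G(x,t,x)\le C t^{-D_1/(D_1+1)}$ for $t\ge 1$, which Grigor'yan's Theorem~1.1 in \cite{Gri97} then upgrades to the full off-diagonal Gaussian upper bound with polynomial prefactor $(1+t)^m$ (for $m$ depending on $D_1,D_2,n,K$) and Gaussian constant arbitrarily close to the sharp value $1/4$, so that any $\delta>0$ is achievable at the cost of enlarging $m$.

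Having established (\ref{eqn-BA}) with $\alpha=D_2$, some $m<\infty$, and $\delta$ as small as we wish, we choose $\epsilon>\frac{27}{2}\delta+14$, which is a non-empty range, and apply Theorem~\ref{thm-2} verbatim. The step I would scrutinize most is the Grigor'yan upgrade from on-diagonal to full Gaussian bound with the sharp constant: the self-improvement trick of absorbing a small portion of the Gaussian weight into an enlarged polynomial prefactor must be checked to go through in the non-doubling regime here. This is, however, exactly the content of \cite{Gri97} Theorem~1.1 in the form the paper uses, so no new analytic ingredient is needed; the Corollary is therefore a clean packaging of the two cited auxiliary results into the hypotheses of Theorem~\ref{thm-2}.
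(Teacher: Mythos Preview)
Your proposal is correct and follows essentially the same approach as the paper: verify Condition~(\ref{eqn-BA}) from the hypotheses and then invoke Theorem~\ref{thm-2}. The paper's own justification is terse---it simply points back to the paragraph preceding the theorem, citing Theorem~3.1 of \cite{BCG01} for the on-diagonal bound and Theorem~1.1 of \cite{Gri97} for the off-diagonal upgrade---and does not spell out the verification of the volume ratio condition (\ref{eqn-BA})(1) at all. Your three-case analysis (both radii large, both small, mixed) via the two-sided polynomial bound and Bishop--Gromov is a correct and welcome elaboration that the paper leaves implicit; likewise your remark that local doubling and Buser's Poincar\'e inequality follow from the Ricci lower bound is exactly what the paper gestures at in the sentence following the Corollary. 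Nothing substantive differs between the two arguments.
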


 As mentioned one can replace the curvature assumption by local volume doubling and local Poincar\'e inequality.

 We also mention that in \cite{Mos21}, it is proven by Mosconi that positive ancient solutions of the heat equation of sub-exponential growth in time must be harmonic if the Ricci curvature is bounded from below by a negative constant.  No volume doubling condition is needed there. But positivity is needed.

  \rmk. 1. A comment for the volume condition in \eqref{eqn-BA} is in order. When $r_2=2 r_1$, the exponential term there becomes a constant and hence it becomes
\begin{equation}
\label{rdouble}
|B(x, 2r)| \le C(1+r)^\alpha |B(x, r)|.
\end{equation} By iteration,  see \eqref{eqn-L5b} below, this implies the growth bound for large geodesic balls:
\[
|B(x, r)| \le C_1 r^{C_2 \alpha \ln(e + r) } |B(x, 1)| \le C_1 e^{C_3 \alpha (\ln(e+r))^2} |B(x, 1)|
\] for some positive constants $C_1, C_2$ and $C_3$.

 2. On the other hand, inequality \eqref{rdouble} alone does not seem to imply
 the exponential term in the volume condition (\ref{eqn-BA}) 1 for all $r>0$. But the exponential term is so large that it hardly is any restriction.

 3. One can also replace condition (\ref{eqn-BA}) 2 (the heat kernel bound) by a modified mean value inequality such as Theorem \ref{thmmmvi} in Section 2, which is broader than the usual one by any polynomial weight.  In \cite{LS84} by Li and Schoen Theorem 1.2,
 it was proven that a mean value inequality with an exponential weight of the radius $e^{C \sqrt{K} r}$  holds for harmonic functions when the Ricci curvature is bounded from below by a negative constant $-K$.
 It is an interesting question to see if the conclusion of the theorem still holds in this case, i.e. if ancient solutions of polynomial growth are polynomials of time when the Ricci curvature is bounded below by a negative constant.

 4.  One naturally wonders if the dimension of $\mathbf{K}^d$ in the theorem is finite. In Section 4, we present an explicit example showing the answer is no in general. As an application,  we revisit the question of the finite dimensionality of the space of harmonic functions of polynomial growth. Using the modified mean value inequality, the slow heat kernel bounds in \cite{BCG01} and \cite{Cho84}, we will also show that the finite dimensionality of the space of harmonic functions in \cite{CM97} and \cite{Li97} are essentially sharp in the sense the volume doubling condition and the mean value inequality can not be relaxed by more than a $\log^2$ factor. This also gives an obstruction to the
 proposal of relaxing the volume doubling condition in \cite{CM98} p118. On the other hand, a positive result on  finite dimensionality is also given with respect to the proposal, c.f. Theorem \ref{thm-ys},  removing all local topological or geometric conditions with respect to a reference point, covering in particular connected sums of $\mathbb{R}^n$ with cylinders.

Theorem \ref{thm-2} will be proven in Section 3 after establishing a modified mean value property in Section 2.  The method of proof for part (a) is based on time derivative estimates of the heat kernel and global integral estimates, which differs from \cite{LZ19}.

 Here is a list of common notations and symbols in the paper. $(M, g)$ is a complete non-compact manifold with metric $g$.
We will use $u=u(x, t)$ to denote a smooth solution to the heat equation $\Delta u - \partial_t u=0$ where $\Delta$ is the Laplace-Beltrami operator; $\nabla f$ denotes the gradient of a function $f$; $d(x, y)$ denotes the distance between two points $x$ and $y$; $B(x, r)$ is the geodesic ball with radius $r$ centered at $x$; $|B(x, r)|$ is the volume of $B(x, r)$; $Q_r(x, t)=
\{ (y, s) \, | \, d(x, y)<r, \, t-r^2 <s <t\}$ is the standard parabolic cube of scale $r$, centered at $(x, t)$; $|Q_r(x, t)|= |B(x, r)| r^2$; $\left<\cdot, \cdot\right>$ denotes the inner product under $g$ for two vector fields; $C, c$ with or without index denote positive constants which may change from line to line.

\vskip24pt

\section{Modified mean value inequality}

\vskip12pt

 In this section we show that a  mean value inequality modified by a polynomial weight of the scale of the parabolic cube continues to hold for manifolds satisfying the assumption of the main result.

\begin{thm}
\label{thmmmvi}
Let $M$ be a complete $n$-dimensional Riemannian manifold satisfying Condition \eqref{eqn-BA}. Fixing $(x, t) \in M \times (0, \infty)$,
there exists a constant $C$,  depending only on the parameters  $\a, \e,\d, m$  in \eqref{eqn-BA}, such that the following statement holds. For any solution $u$ to the heat equation
\begin{equation}\label{eqn-H1}
\D u - \p_s u = 0,
\end{equation}  in $Q_{2r}(x,t)$, $r>0$,
we have
 \begin{equation}\label{eqn-H18}\aligned
|u(x,  t)| \leq    C  (1 +  r^{2m+2\a} ) \left( \frac{1}{|Q_r(x,t)|}\int_{Q_{2r}(x,t)} u^2dyds \right)^{\frac{1}{2}}.
\endaligned
\end{equation}

\end{thm}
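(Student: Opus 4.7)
The plan is to prove \eqref{eqn-H18} by a Duhamel representation against the heat kernel $G$ combined with a spacetime cutoff, and then to use the Gaussian upper bound in \eqref{eqn-BA}(2) together with the volume comparison \eqref{eqn-BA}(1) to absorb the loss from non-doubling into the polynomial weight $(1+r^{2m+2\alpha})$. This mirrors the classical derivation of a parabolic mean value inequality from Gaussian heat kernel upper bounds, but the non-doubling setting requires a careful quantitative tracking of errors.

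First I would introduce a smooth spacetime cutoff $\psi=\psi(y,s)$ supported in $Q_{3r/2}(x,t)$, equal to $1$ on $Q_r(x,t)$, with $|\nabla\psi|\le C/r$ and $|\partial_s\psi|+|\Delta\psi|\le C/r^2$. Since $u$ is caloric in $Q_{2r}(x,t)$, the function $w:=u\psi$ extends by zero to all of $M\times\R$ and satisfies $(\partial_s-\Delta)w=-f$ with
\[
f:=2\la\nabla u,\nabla\psi\ra+u(\Delta\psi-\partial_s\psi),
\]
supported in the parabolic annulus $Q_{3r/2}(x,t)\setminus Q_r(x,t)$. Since $w\equiv 0$ at the initial time $t-(3r/2)^2$, Duhamel's formula yields
\[
u(x,t)=-\int_{t-(3r/2)^2}^{t}\!\!\int_M G(x,t-s,y)\,f(y,s)\,dy\,ds.
\]

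The core estimate is then a uniform pointwise upper bound on $G$ over the support of $f$. Combining \eqref{eqn-BA}(2) with \eqref{eqn-BA}(1) applied at the scales $r_1=\sqrt{t-s}$ and $r_2=2r$ (and an elementary comparison of $|B(y,2r)|$ with $|B(x,2r)|$, valid because $d(x,y)\le 3r/2$), one obtains
\[
G(x,t-s,y)\le\frac{C(1+r^{2m+2\alpha})}{|B(x,2r)|}\exp\!\left(\frac{4r^2}{(4+\epsilon)(t-s)}-\frac{d(x,y)^2}{(4+\delta)(t-s)}\right).
\]
Splitting the support of $f$ into the ``spatial'' part $\{d(x,y)\ge r\}$ and the ``temporal'' part $\{t-s\ge r^2\}$: on the former the net exponent is bounded by $\frac{r^2}{t-s}\bigl(\frac{4}{4+\epsilon}-\frac{1}{4+\delta}\bigr)\le 0$, which follows from $\epsilon\ge 12+4\delta$ and hence from the hypothesis $\epsilon>\frac{27}{2}\delta+14$; on the latter both exponents are $O(1)$. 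Therefore $G\le C(1+r^{2m+2\alpha})/|B(x,2r)|$ uniformly on the support of $f$.

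Plugging this bound into the Duhamel formula, applying Cauchy--Schwarz together with the parabolic Caccioppoli inequality $\int_{Q_{3r/2}}|\nabla u|^2\le (C/r^2)\int_{Q_{2r}}u^2$ (a purely local integration by parts that needs no global volume hypothesis), and simplifying via $|B(x,2r)|\ge|B(x,r)|$ and $|B(x,3r/2)|\le C(1+r)^\alpha|B(x,r)|$, produces exactly \eqref{eqn-H18}. The main obstacle is the uniform heat-kernel bound of the previous paragraph: in the absence of volume doubling, transferring the natural heat-kernel bound at scale $\sqrt{t-s}$ up to scale $r$ incurs an exponential loss $\exp(cr^2/(t-s))$ that must be dominated by the Gaussian decay of $G$, and the hypothesis $\epsilon>\frac{27}{2}\delta+14$ provides precisely the margin needed to secure this domination.
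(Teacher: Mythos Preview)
Your strategy is the paper's: Duhamel representation against $G$ with a spacetime cutoff, the Gaussian bound \eqref{eqn-BA}(2), the volume comparison \eqref{eqn-BA}(1) to pass between the scales $\sqrt{t-s}$ and $r$, and parabolic Caccioppoli. Your central observation---that the exponential loss $\exp\bigl(\tfrac{4r^2}{(4+\epsilon)(t-s)}\bigr)$ incurred by the volume comparison is dominated by the Gaussian decay $\exp\bigl(-\tfrac{d(x,y)^2}{(4+\delta)(t-s)}\bigr)$ on the spatial annulus once $\epsilon\ge 12+4\delta$, and is $O(1)$ on the temporal part---is correct and is exactly the mechanism the paper exploits.

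There is, however, one genuine gap. You posit a cutoff with $|\Delta\psi|\le C/r^2$, but Condition \eqref{eqn-BA} carries no curvature lower bound and hence no Laplacian comparison; for the standard radial choice $\psi=\phi(d(x,\cdot))\chi(s)$ one has $\Delta\psi=(\phi''+\phi'\,\Delta d)\chi$ with $\Delta d$ a priori uncontrolled. The paper is careful here: it assumes only $|\nabla\varphi|\le C/r$ and $|\partial_t\varphi|\le C/r^2$, and in \eqref{eqn-H2} integrates by parts once in space to trade $G\,u\,\Delta\varphi$ for $u\,\langle\nabla_y G,\nabla\varphi\rangle+G\,\langle\nabla u,\nabla\varphi\rangle$. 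The new term with $\nabla_y G$ is then handled by applying Caccioppoli to $G$ itself on the spatial annulus $\{r/4<d(x,y)<2r\}$ and feeding the Gaussian bound on $G$ back in (the paper's $I_1$, equations \eqref{eqn-H15}--\eqref{eqn-H17}). Your uniform pointwise bound on $G$ over the support of $f$ remains valid, but you cannot simply pair it with $\int|u||\Delta\psi|$; after the integration by parts you must instead bound $\int_{\mathrm{supp}\,\nabla\psi}|\nabla_y G|^2$, which is the extra bookkeeping the paper's $I_1,I_2,I_3$ decomposition carries out. With that modification your argument coincides with the paper's.
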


\begin{proof}

 Let $\varphi = \varphi(x, t)$ be a standard cut-off function supported in $Q_{\frac{3}{2}r}(x, t)$ such that $\varphi=1$ in $Q_r(x, t)$,  and $|\n \varphi| \leq \frac{C}{r}$,  $| \p_t \varphi | \leq \frac{C}{r^2} $,   here $C$ is a positive constant.   Then we have
\begin{equation*}
(\D-\p_s)(u\varphi) = u (\D-\p_s)\varphi + 2\la \n u, \n \varphi \ra.
\end{equation*}
Let $G$ be the heat kernel on $M$. Then
\begin{equation}\label{eqn-H2}\aligned
u\varphi = & - \int G\left( \D\varphi \cdot u -\p_s \varphi \cdot u + 2\la \n u, \n \varphi \ra \right) dyds \\
=& \int (\la \n G, \n \varphi \ra u - G \la \n u,\n \varphi \ra ) dyds+ \int G\cdot \p_s \varphi \cdot u dyds.
\endaligned
\end{equation}
Hence
\begin{equation}\label{eqn-H3}\aligned
|u(x, t)| \leq & \frac{C}{r} \int_{{\rm Supp}|\n \varphi|}|\n_y G(x, t, y, s)| \cdot |u(y, s)|dyds \\
&+ \frac{C}{r} \int_{{\rm Supp}|\n \varphi|} G(x, t, y, s) \cdot |\n u(y, s)|dyds \\
&+ \frac{C}{r^2} \int_{{\rm Supp}\p_t \varphi} G(x, t, y, s) \cdot | u(y, s)|dyds \\
=:&  I_1 +I_2 +I_3.
\endaligned
\end{equation}  We will bound $I_3, I_2 $ and $I_1$ respectively.

By the condition (\ref{eqn-BA}), we get
\begin{equation}\label{eqn-H4}\aligned
I_3 \leq \frac{C}{r^2}  \int_{{\rm Supp}\p_t \varphi}\frac{(1+t-s)^m e^{-\frac{d^2(x, y)}{(4+\d)(t-s)}}|u(y, s)|}{\sqrt{|B(x, \sqrt{t-s})|}\cdot \sqrt{|B(y, \sqrt{t-s})|}}dyds.
\endaligned
\end{equation}
 Notice that
\[
{\rm Supp}\p_t \varphi \subset B(x, \frac{3}{2}r) \times [t-(\frac{3}{2}r)^2, t-r^2].
\]
Then we have in ${\rm Supp}\p_t \varphi$,

\begin{equation*}
r^2 \leq t-s \leq (\frac{3}{2}r)^2,  \quad  d(x,  y) \leq  \frac{3}{2}r.
\end{equation*}
 It follows that
\[
\left( \sqrt{t-s}+d(x, y) +1 \right)^\a \leq (1+3r)^\a \leq 2^{\a} (1+3^\a r^\a) \leq  6^\a (1+r^\a).
\]
Thus from (\ref{eqn-BA}),  we have  for any $\eta>0$ that
\begin{equation}\label{eqn-H5}\aligned
& |B(x, \sqrt{t-s})| \leq |B(y, \sqrt{t-s} + d(x, y))| \\
\leq & C |B(y, \sqrt{ t-s})| e^{\frac{1}{4+\e}\cdot \frac{(\sqrt{t-s}+d(x, y))^2}{t-s}}\cdot \left( \sqrt{t-s}+d(x, y) +1 \right)^\a \\
\leq &  C  |B(y, \sqrt{ t-s})| e^{\frac{1+\eta}{4+\e} \cdot \frac{d^2(x, y)}{t-s}}\cdot e^{\frac{1+\eta^{-1}}{4+\e}} \left( \sqrt{t-s}+d(x, y) +1 \right)^\a \\
\leq &   C  |B(y, \sqrt{ t-s})| e^{\frac{1+\eta}{4+\e}\cdot \frac{d^2(x, y)}{t-s}}\cdot e^{\frac{1+\eta^{-1}}{4+\e}}  \cdot  6^\a (1+r^\a).
\endaligned
\end{equation}
This implies that
\begin{equation}\label{eqn-H6}\aligned
 \frac{1}{ \sqrt{|B(y, \sqrt{ t-s})|} }
\leq   \frac{C (\sqrt 6)^\a (1+r^{\frac{\a}{2}})  e^{\frac{1+\eta^{-1}}{2(4+\e)}}}{ \sqrt{ |B(x, \sqrt{ t-s})|} }  e^{{\frac{1+\eta}{2(4+\e)}\cdot \frac{d^2(x, y)}{t-s}}}.
\endaligned
\end{equation}
 On the other hand,  since $t-s \geq r^2$ in ${\rm Supp}\p_t \varphi$, we deduce
 \begin{equation}\label{eqn-H7}\aligned
 |B(x, \sqrt{ t-s})|  \geq  |B(x,  r)|.
\endaligned
\end{equation}
By the inequalities (\ref{eqn-H4}),   (\ref{eqn-H6}) and  (\ref{eqn-H7}),   we obtain  %that for $\e> \d$,
 \begin{equation*}\aligned
I_3 \leq & \frac{C (\sqrt 6)^\a (1+r^{\frac{\a}{2}}) 2^{m} (\frac{3}{2})^{2m}(1+r^{2m})  e^{\frac{1+\eta^{-1}}{2(4+\e)}}}{r^2 |B(x, r)|}\int_{{\rm Supp} \p_t \varphi} e^{-\left( \frac{1}{4+\d} - \frac{1+\eta}{2(4+\e)} \right)\frac{d^2(x, y)}{t-s}}|u(y, s)|dyds \\
= & \frac{C  (\sqrt 6)^\a  2^{m+1} (\frac{3}{2})^{2m}(1+r^{2m+\frac{\a}{2}})  e^{\frac{1+\eta^{-1}}{2(4+\e)}}}{r^2 |B(x, r)|}\int_{{\rm Supp} \p_t \varphi} e^{-\frac{4+2\e-\d-(4+\d)\eta}{2(4+\d)(4+\e)}\frac{d^2(x, y)}{t-s}}|u(y, s)|dyds.
\endaligned
\end{equation*}
Therefore, if $4+2\e-\d>0$, i.e.,
\begin{equation}\label{e-d-e-1}
\e>\frac{\d}{2}-2,
\end{equation}
then we can choose $\eta>0$ depending on $\e$ and $\d$ such that
 \begin{equation}\label{eqn-H8}\aligned
I_3 \leq  &  \frac{  C(\a,\e, \d, m)  (1+ r^{2m+\frac{\a}{2} })}{|Q_r(x, t)|} \int_{Q_{2r}(x, t)} |u(y, s)|dyds.
\endaligned
\end{equation}
Notice that
\begin{equation}\label{eqn-HQ1}\aligned
|Q_{2r}(x, t)| = & (2r)^2 |B(x, 2r)| \\
\leq & (2r)^2\cdot C(1+2r)^\a e^{\frac{4}{4+\e}}|B(x, r)| \\
\leq & C  C(\a, \e)  r^2 (1+r^{\a} )|B(x, r)|.
\endaligned
\end{equation}
The inequalities (\ref{eqn-H8}), (\ref{eqn-HQ1}) and the Cauchy--Schwarz inequality imply that
 \begin{equation}\label{eqn-H8-1}\aligned
I_3 \leq &  \frac{  C(\a,\e, \d, m)  (1+ r^{2m+\frac{\a}{2}} )}{|Q_r(x, t)|} \int_{Q_{2r}(x, t)} |u(y, s)|dyds \\
\leq &  \frac{  C(\a,\e, \d, m)  (1+ r^{2m+\frac{\a}{2}} )}{|Q_r(x, t)|} \left( \int_{Q_{2r}(x, t)} u^2 dyds \right)^{\frac{1}{2}} \cdot \left(  \int_{Q_{2r}(x, t)}  dyds \right)^{\frac{1}{2}} \\
=&  \frac{  C(\a,\e, \d, m)  (1+ r^{2m+\frac{\a}{2}} )}{r^2 |B(x, r)|} \sqrt{|Q_{2r}(x, t)|}\cdot \left( \int_{Q_{2r}(x, t)} u^2 dyds \right)^{\frac{1}{2}} \\
\leq &   C(\a,\e, \d, m)  (1+ r^{2m+\a} ) \left( \frac{1}{|Q_r (x, t)|} \int_{Q_{2r}(x, t)} u^2 dyds \right)^{\frac{1}{2}}.
\endaligned
\end{equation}

 Next we bound $I_2$.
By the Cauchy--Schwarz inequality and Caccioppoli inequality,  we derive
 \begin{equation}\label{eqn-H9}\aligned
I_2 \leq & \frac{C}{r} \left( \int_{{\rm Supp}\n \varphi} G^2 dyds \right)^{\frac{1}{2}} \left( \int_{Q_{\frac{3}{2}r}(x, t)} |\n u|^2 dyds \right)^{\frac{1}{2}} \\
\leq & \frac{C}{r^2} \left( \int_{{\rm Supp}\n \varphi} G^2 dyds \right)^{\frac{1}{2}} \left( \int_{Q_{2r}(x, t)} u^2 dyds \right)^{\frac{1}{2}}.
\endaligned
\end{equation}
By the inequality  (\ref{eqn-H5}), we have
 \begin{equation}\label{eqn-H10}\aligned
&\int_{{\rm Supp}\n \varphi} G^2 dyds \\
\leq & C\int_{{\rm Supp}\n \varphi} \frac{(1+t-s)^{2m}e^{-\frac{2d^2(x, y)}{(4+\d)(t-s)}}}{|B(x, \sqrt{t-s})|\cdot |B(y, \sqrt{t-s})|}dyds \\
\leq & C(\e, \eta) \int_{{\rm Supp}\n \varphi} \frac{(1+t-s)^{2m}}{|B(x, \sqrt{t-s})|^2} e^{-\left( \frac{2}{4+\d} - \frac{1+\eta}{4+\e} \right)\cdot \frac{d^2(x, y)}{t-s}}\cdot (1+d(x, y) +\sqrt{t-s})^\a dyds.
\endaligned
\end{equation}
 Notice that
\[
{\rm supp} \n \varphi \subset Q_{\frac{3}{2}r}(x, t) \backslash B(x,r)\times [t-(\frac{3}{2}r)^2,  t].
\]
Then we have in ${\rm supp}\n \varphi$,
\[
t-s\leq (\frac{3}{2}r)^2, \quad r\leq d(x,y) <\frac{3}{2}r.
\]
Thus from the condition (\ref{eqn-BA}),
\begin{equation*}\label{eqn-H001}\aligned
\frac{\left| B(x, \frac{3}{2}r) \right|}{\left| B(x, \sqrt{t-s}) \right|} \leq & C\left( 1+\frac{3}{2}r \right)^\a e^{\frac{1}{4+\e}\cdot \frac{\left( \frac{3}{2}r \right)^2}{t-s}} \\
\leq & C\left( 1+\frac{3}{2}r \right)^\a e^{\frac{1}{4+\e}\cdot \frac{9}{4}\frac{d^2(x,y)}{t-s}}.
\endaligned
\end{equation*}
It follows that
\begin{equation}\label{eqn-H002}\aligned
\frac{\left| B(x, r) \right|}{\left| B(x, \sqrt{t-s}) \right|} \leq & \frac{\left| B(x, \frac{3}{2}r) \right|}{\left| B(x, \sqrt{t-s}) \right|} \leq   C\left( 1+\frac{3}{2}r \right)^\a e^{\frac{9}{4(4+\e)}\cdot \frac{d^2(x,y)}{t-s}}.
\endaligned
\end{equation}
 Then from (\ref{eqn-H10}), we deduce
 \begin{equation}\label{eqn-H003}\aligned
& \int_{{\rm Supp}\n \varphi} G^2 dyds \\
\leq & \frac{ C(\a,\e, \eta, m) (1+ r^{4m} )(1+\frac{3}{2}r)^{2\a}(1+ r^{\a} )}{\left| B(x, r) \right|^2} \int_{{\rm supp}\n \varphi} e^{-\left( \frac{2}{4+\d} -\frac{1+\eta}{4+\e} -\frac{9}{2(4+\e)} \right)\frac{d^2(x, y)}{t-s}}dyds \\
\leq & \frac{ C(\a,\e, \eta, m) \left(1+ r^{4m+ 3\a}  \right)}{\left| B(x, r) \right|^2} \int_{Q_{\frac{3}{2}r}(x, t)} e^{-\left( \frac{2}{4+\d}  -\frac{11+2\eta}{2(4+\e)} \right)\frac{d^2(x, y)}{t-s}}dyds.
\endaligned
\end{equation}
If  $\e > \frac{11}{4}\d +7$, then we can choose $\eta>0$  depending only on $\e$ and $\d$ such that $ \frac{2}{4+\d}  -\frac{11+2\eta}{2(4+\e)}>0$. Then from (\ref{eqn-H003}),
 \begin{equation}\label{eqn-H11}\aligned
 \int_{{\rm Supp}\n \varphi} G^2 dyds
\leq   \frac{ C(\a,\e, \d, m)  \left(1+ r^{4m+3\a}  \right)}{\left| B(x, r) \right|^2} \cdot \left| Q_{\frac{3}{2}r}(x, t)\right|.
\endaligned
\end{equation}

By the condition (\ref{eqn-BA}),
\begin{equation*}
 \left|B(x, \frac{3}{2}r)\right| \leq C \left( \frac{3}{2}r+1 \right)^\a \cdot e^{\frac{1}{4+\e}\cdot \frac{9}{4}}|B(x, r)| \leq C  C(\a,\e)  (1+ r^{\a} )|B(x, r)|.
\end{equation*}
Therefore
\begin{equation}\label{eqn-H12}
\left|Q_{\frac{3}{2}r}(x,t)\right| = \left( \frac{3}{2}r \right)^2 \left|B(x, \frac{3}{2}r)\right| \leq C   C(\a,\e)  (1+  r^\a )r^2|B(x, r)|.
\end{equation}
Substituting (\ref{eqn-H12}) into (\ref{eqn-H11}),  we obtain
\begin{equation}\label{eqn-H13}\aligned
\int_{{\rm Supp}\n \varphi} G^2 dyds
\leq \frac{  C(\a,\e, \d, m)  \left( 1+  r^{4m+4\a}  \right) r^2}{|B(x, r)|}.
\endaligned
\end{equation}
Combining (\ref{eqn-H9}) with (\ref{eqn-H13}), it follows
 \begin{equation}\label{eqn-H14}\aligned
I_2 \leq & \frac{  C(\a,\e, \d, m)  (1+ r^{2m+2\a} )}{r \sqrt{|B(x, r)|}} \cdot \left( \int_{Q_{2r}(x,t)} u^2dyds \right)^{\frac{1}{2}} \\
=&   C(\a,\e, \d, m)  (1+ r^{2m+2\a} )\left( \frac{1}{|Q_r(x,t)|}\int_{Q_{2r}(x,t)} u^2 dyds \right)^{\frac{1}{2}}.
\endaligned
\end{equation}

 Finally we bound $I_1$.
By the Caccioppoli inequality,
 \begin{equation}\label{eqn-H15}\aligned
&\int_{{\rm Supp} \n \varphi } | \n_y G(x, t, y, s) |^2 dyds \\
\leq &  \int_{(Q_{\frac{3}{2}r}(x,t)\backslash Q_r(x,t)) \cap {\rm Supp} \n \varphi} | \n_y G(x, t, y, s) |^2 dyds  \\
\leq & \frac{C}{r^2} \int_{(Q_{2r}(x, t)\backslash Q_{\frac{1}{2}r}(x, t)) \cap \{d(x, y)>r/4\}} G^2(x, t, y, s)dyds \\
\leq & \frac{C}{r^2} \int_{(Q_{2r}(x, t)\backslash Q_{\frac{1}{2} r}(x, t)) \cap \{d(x, y)>r/4\}} \frac{(1+t-s)^{2m}e^{-\frac{2d^2(x, y)}{(4+\d)(t-s)}}}{|B(x, \sqrt{t-s})|\cdot |B(y, \sqrt{t-s})|}dyds.
\endaligned
\end{equation}
 Then following the same argument above as we obtain (\ref{eqn-H13}), we have from (\ref{eqn-H5}),  (\ref{eqn-HQ1}), (\ref{eqn-H002}) and (\ref{eqn-H15}) that
 \begin{equation}\label{eqn-H16}\aligned
&\int_{{\rm Supp} \n \varphi } | \n_y G(x, t, y, s) |^2 dyds \\
\leq & \frac{  C(\e, \eta)  (1+ r^{4m} )}{r^2} \int_{[Q_{2r}(x, t)\backslash Q_{\frac{1}{2}r}(x, t)] \cap \{d(x, y)>r/4 \}} \frac{1}{|B(x, \sqrt{t-s})|^2} e^{-\left(\frac{2}{4+\d}- \frac{1+\eta}{4+\e}\right)\cdot \frac{d^2(x, y)}{t-s}} \\
& \cdot \left( 1+d(x, y) +\sqrt{t-s} \right)^\a dyds \\
\leq & \frac{  C(\a,\e, \d, m)  (1+ r^{4m+3\a})}{r^2 |B(x, r)|^2} \cdot |Q_{2r}(x, t)| \\
\leq  & \frac{  C(\a,\e, \d, m)  (1+ r^{4m+3\a})}{r^2 |B(x, r)|^2} \cdot C C(\a,\e)  r^2 (1+ r^\a )|B(x, r)| \\
\leq  & \frac{ C(\a,\e, \d, m)  (1+ r^{4m+4\a})}{ |B(x, r)|}.
\endaligned
\end{equation}

The Cauchy-Schwarz inequality and (\ref{eqn-H16}) imply that
 \begin{equation}\label{eqn-H17}\aligned
I_1 \leq & \frac{C}{r} \left(\int_{{\rm Supp}\n \varphi} | \n_y G(x, t, y, s) |^2dyds \right)^{\frac{1}{2}} \left( \int_{Q_{2r}(x, t)} u^2dyds \right)^{\frac{1}{2}} \\
\leq &   C(\a,\e, \d, m)  (1+ r^{2m+2\a} ) \left( \frac{1}{|Q_r(x,t)|}\int_{Q_{2r}(x,t)} u^2dyds \right)^{\frac{1}{2}}.
\endaligned
\end{equation}
Hence by (\ref{eqn-H3}), (\ref{eqn-H8-1}), (\ref{eqn-H14}),  (\ref{eqn-H17})  and the Young inequality,   we have
 \begin{equation*}\aligned
|u(x,  t)| \leq    C(\a,\e, \d, m)  (1 + r^{2m+2\a}) \left( \frac{1}{|Q_r(x,t)|}\int_{Q_{2r}(x,t)} u^2dyds \right)^{\frac{1}{2}}.
\endaligned
\end{equation*}
\end{proof}

%\vskip12pt

\section{ Space of ancient solutions}

\vskip12pt

 In this section we will prove the first main result. \\

\noindent{\bf Proof of Theorem \ref{thm-2} (a).}
The idea of the proof is different from that in \cite{LZ19} where a local integral method is used. Here the proof is based on  time derivatives estimates of the heat kernel $G$ and global integral estimates.

\noindent{\bf Step 1.}  We show the estimates: for all $x, y \in M$, $t>0$, $k=1, 2, 3,...$, there is a constant $C>0$ depending only on the parameters in the theorem, such that
\begin{equation}\label{eqn-HA25a}\aligned
\left| \p_t^{k}G(x, t, y) \right| \leq & \frac{ C C^k (2+\sqrt{2})^{(k+1)(k+2)} \left( 1+t^{2m+ \frac{7\a}{4}} \right)}{t^{k}\sqrt{\left| B(x, \sqrt t) \right|\cdot \left| B(y, \sqrt t) \right|}} \cdot e^{-\frac{d^2(x, y)}{(4+2\d)t}}.
\endaligned
\end{equation}

 Fixing $r \le \sqrt{t}/16$ and another parameter $\sigma \in (1, 2]$ to be chosen later, let $\psi$ be a standard cut-off function supported in $Q_{\sigma r} \equiv Q_{ \sm r}(x, t)$,  such that $\psi=1$ in $Q_{r}(x, t)$ and $|\n \psi| \leq \frac{C}{(\sm-1)r},  |\p_t \psi|\leq \frac{C}{[(\sm-1) r]^2}$.  Then we have
 \begin{equation}\label{eqn-HA1}\aligned
\int_{Q_{\sm r}} (\D u)^2 \psi^2 dz d\tau = & \int_{Q_{\sm r}} u_t \cdot \D u \cdot \psi^2 dz d\tau \\
=& - \int_{Q_{\sm r}} \la \n u, \n(u_t \psi^2) \ra dz d\tau \\
=& - \int_{Q_{\sm r}} \la \n u, \n u_t \ra \psi^2dz d\tau - 2 \int_{Q_{\sm r}} \la \n u, \n \psi \ra \psi u_t dz d\tau \\
=& -\frac{1}{2} \int_{Q_{\sm r}} (|\n u|^2)_t \psi^2 dz d\tau - 2\int_{Q_{\sm r}} u_t \psi \la \n u, \n \psi \ra dz d\tau \\
\leq & \frac{1}{2}\int_{Q_{ \sigma  r}} |\n u|^2 \cdot ( \psi^2)_t dz d\tau + \frac{1}{2}\int_{Q_{\sm r}} u_t^{2} \psi^2 dz d\tau \\
&+2 \int_{Q_{\sm r}} |\n u|^2 |\n \psi|^2 dz d\tau.
\endaligned
\end{equation}
 Here and later we also used $(z, \tau)$ as  integration variables for simplicity of notations unless confusion arises.
 It follows that
 \begin{equation}\label{eqn-HA2}\aligned
\int_{Q_{\sm r}} (\D u)^2 \psi^2 dz d\tau = &\int_{Q_{\sm r}} u_t^{2} \psi^2 dz d\tau \\
\leq & \int_{Q_{\sm r}} |\n u|^2 \cdot ( \psi^2)_t dz d\tau + 4 \int_{Q_{\sm r}} |\n u|^2 |\n \psi|^2 dz d\tau \\
\leq & \frac{C}{[(\sm-1) r]^2}\int_{Q_{\sm r}}|\n u|^2 dz d\tau.
\endaligned
\end{equation}
 By the Caccioppoli inequality,
 \begin{equation}\label{eqn-HA3}\aligned
\int_{Q_{\sm r}} |\n u|^2 dz d\tau \leq \frac{C}{[(\sm-1) r]^2}\int_{Q_{\sm^2 r}} u^2 dz d\tau.
\endaligned
\end{equation}
 From (\ref{eqn-HA2}) and (\ref{eqn-HA3}),  we get
 \begin{equation}\label{eqn-HA4}\aligned
\int_{Q_{r}} (\p_t u)^2 dz d\tau = \int_{Q_{r}} (\D u)^2 dz d\tau \leq \frac{C}{[(\sm-1)r]^4} \int_{Q_{\sm^2 r}}u^2 dz d\tau.
\endaligned
\end{equation}

From  inequality (\ref{eqn-HA4}), we choose
\[
\sm_j=\left( 1+\sum^j_{i=0}  (2+\sqrt 2)^{-i-1} \right)^2,
\] $j=0, 1, ..., k$, then
\begin{equation}\label{eqn-HA18}
\aligned
\int_{Q_r}(\p_t^{2} u)^2 dz d\tau& = \int_{Q_r} (\D u_t)^2 dz d\tau \leq \frac{C}{[(\sqrt{\sm_1}-1)r]^4}\int_{Q_{\sm_1 r}} u_t^{2}dz d\tau \\
 &\leq \frac{C^2}{[(\sqrt{\sm_1}-1)r]^4 [(\sqrt{\sm_2}-\sqrt{\sm_1})r]^4} \int_{Q_{\sm_2 \, r}} u^2 dz d\tau.
\endaligned
\end{equation}
By induction, we deduce
\begin{equation}\label{eqn-HA19}\aligned
\int_{Q_r}(\p_t^{k} u)^2 dz d\tau  \leq \frac{C^k (2+\sqrt{2})^{2(k+1)(k+2)}}{r^{4k}} \int_{Q_{\sm_k \, r}} u^2 dz d\tau.
\endaligned
\end{equation}
Since $\p_t^{k}u$ is also a solution to the heat equation, then by the modified mean value inequality and (\ref{eqn-HA19}),
\begin{equation}\label{eqn-HA20a}\aligned
\left| \p_t^{k}u (x, t)\right|^2 \leq & \frac{C(1+t^{2m+ 2\a})}{|Q_r(x, t)|} \int_{Q_{2r}}\left| \p_t^{k}u \right|^2 dz d\tau \\
\leq & \frac{C \tilde{C}_k (1+t^{2m+ 2\a})}{|Q_r(x, t)|}\cdot \frac{1}{r^{4k}} \int_{Q_{ 4r}} u^2 dz d\tau,
\endaligned
\end{equation}where the fact $\sm_k r < 2 r$ is used and $\tilde{C}_k= C^k (2+\sqrt{2})^{2(k+1)(k+2)}$.
Applying $G$ in the above inequality (\ref{eqn-HA20a}),
\begin{equation}\label{eqn-HA20}\aligned
\left| \p_t^{k}G(x, t, y) \right|^2 \leq & \frac{C \tilde{C}_k (1+t^{2m+ 2\a})}{|Q_r(x, t)|}\cdot \frac{1}{r^{4k}} \int_{Q_{4r}(x, t)} G^2(z,\tau,y) dzd\tau \\
\leq &  \frac{C \tilde{C}_k (1+t^{2m+ 2\a})}{|Q_r(x, t)|}\cdot \frac{1}{r^{4k}} \int_{Q_{4r}(x,t)} \frac{(1+\tau)^{2m}e^{-\frac{2d^2(z,y)}{(4+\d)\tau}}}{|B(z,\sqrt{\tau})|\cdot |B(y, \sqrt{\tau})|}dzd\tau \\
\leq & \frac{C \tilde{C}_k (1+t^{4m+ 2\a})}{|Q_r(x, t)|}\cdot \frac{1}{r^{4k}} \int_{Q_{4r}(x,t)} \frac{e^{-\frac{2d^2(z,y)}{(4+\d)\tau}}}{|B(z,\sqrt{\tau})|\cdot |B(y, \sqrt{\tau})|}dzd\tau.
\endaligned
\end{equation}
Let $(z, \tau) \in Q_{2r}(x, t)$,  $r=\frac{\sqrt t}{16}$.
By the triangle inequality and our volume condition, we obtain
\begin{equation}\label{eqn-HA21}\aligned
\left| B(x, \sqrt t) \right| \leq & \left| B(z, \frac{3}{2}\sqrt t) \right| \leq C\left( 1+t^\frac{\a}{2} \right) \left| B(z, \sqrt \tau) \right|, \\
\left| B(y, \sqrt t) \right| \leq &  C\left( 1+t^\frac{\a}{2} \right) \left| B(y, \sqrt \tau) \right|
\endaligned
\end{equation}
and
\begin{equation*}\aligned
d^2(z,y) \geq & (1-\widetilde\d_0)d^2(x,y) -C\tau,
\endaligned
\end{equation*}
where  $\widetilde\d_0 < \frac{\d}{4+2\d}$ is another positive constant.  Then we derive
\begin{equation}\label{eqn-HA22}\aligned
e^{-\frac{2d^2(z,y)}{(4+\d)\tau}} \leq e^{-\frac{2\left( (1-\widetilde\d_0)d^2(x, y)-C\tau \right)}{(4+\d)\tau}} = e^{\frac{2C}{4+\d}}\cdot e^{-\frac{2d^2(x,y)}{\frac{4+\d}{1-\widetilde\d_0}\tau}}< C e^{-\frac{2d^2(x, y)}{(4+2\d)t}}.
\endaligned
\end{equation}
From (\ref{eqn-HA20})--(\ref{eqn-HA22}),  we have
\begin{equation}\label{eqn-HA23}\aligned
\left| \p_t^{k}G(x, t, y) \right|^2 \leq & \frac{C \tilde{C}_k (1+t^{4m+ 3\a})}{r^{4k}\cdot r^2|B(x, r)|} \cdot \frac{e^{-\frac{2d^2(x, y)}{(4+2\d)t}}}{\left| B(x, \sqrt t) \right|\cdot \left| B(y, \sqrt t) \right|} \cdot \left| Q_{4r}(x, t) \right|.
\endaligned
\end{equation}
By the condition (\ref{eqn-BA}),
\begin{equation}\label{eqn-HA24}\aligned
\frac{ \left| Q_{ 4r}(x, t) \right|}{r^{2}|B(x, r)|} = & \frac{16 \left| B(x, 4r) \right|}{|B(x, r)|}
\leq  C (1+ 4r)^\a e^{\frac{16}{4+\e}}
\leq  C (1+ t^{\frac{\a}{2}}).
\endaligned
\end{equation}
Substituting (\ref{eqn-HA24}) into (\ref{eqn-HA23}),
\begin{equation*}\aligned
\left| \p_t^{k}G(x, t, y) \right|^2 \leq & \frac{ C C^k (2+\sqrt 2)^{2(k+1)(k+2) } \left(1+t^{4m+ \frac{7\a}{2}}\right)}{t^{2k}\left| B(x, \sqrt t) \right|\cdot \left| B(y, \sqrt t) \right|} \cdot e^{-\frac{2d^2(x, y)}{(4+2\d)t}}.
\endaligned
\end{equation*}
Namely,
\begin{equation*}\label{eqn-HA25}\aligned
\left| \p_t^{k}G(x, t, y) \right| \leq & \frac{ C C^k (2+\sqrt 2)^{(k+1)(k+2)} \left( 1+t^{2m+ \frac{7\a}{4}} \right)}{t^{k}\sqrt{\left| B(x, \sqrt t) \right|\cdot \left| B(y, \sqrt t) \right|}} \cdot e^{-\frac{d^2(x, y)}{(4+2\d)t}}.
\endaligned
\end{equation*}

\noindent{\bf Step 2.}

 Note the assumption on the growth of $u$ is assumed only for $t \le 0$. We shall deduce that for all $t>0$ we have similar bounds for the extended solution
\begin{equation}\label{eqn-HAC1}
|u(x, t)| \leq C(\d,\e, d, k, m, \a) \left( 1+t^{\frac{m}{2}+\frac{3\a}{8}} \right) \left( t^{\frac{d}{2}} + d^{d}(x, 0) + 1 \right).
\end{equation}

 Since the manifold $M$ be may not be volume doubling, we need to explain why the solution can be extended uniquely.

 When $r_2=2 r_1$, the exponential term in our condition (\ref{eqn-BA})   becomes a constant and hence
\[
|B(x, 2r)| \le C(1+r)^\alpha |B(x, r)|.
\]By iteration, as done in \eqref{eqn-L5b} below, this implies the growth bound for $r \ge 1$:
\[
|B(0, r)| \le C_1 r^{C_2 \alpha [\ln(e + r)] } |B(0, 1)|
\] for some positive constants $C_1, C_2$. Here $0$ is a reference point in $M$. So the volume of the balls have sub-exponential growth. Using this and our growth condition for $u$ \eqref{eqn-HAC1}, we can conclude from Theorem 9.2 in \cite{Gri} that $u$ can be uniquely extended to all $t>0$.  By the same theorem we also have the stochastic completeness, i.e. $\int_M G(x, t, y) dy =1$.

 Fix $(x, t) \in M\times (-\infty,  \infty)$.  Now we choose $r=\sqrt{|t|}$.  Pick a large positive number $T> 8^k r^2$.  Note that for $t>-T$,
  \begin{equation}\label{eqn-HA27}\aligned
u(x, t) = \int G(x, t, y, -T)u(y,-T)dy = \int G(x, t+T, y) u(y, -T)dy.
\endaligned
\end{equation}
Then we have
  \begin{equation}\label{eqn-HA28}\aligned
u^2(x, t)\leq & \left( \int Gdy \right) \left( \int G(x, t+T, y) u^2(y, -T)dy \right) \\
=& \int G(x, t+T, y) u^2(y, -T)dy  \\
\leq & C\int G(x, t+T, y) \left( d^2(y, 0)+ T +1  \right)^d dy \\
\leq & C \int \frac{(1+t+T)^m e^{-\frac{d^2(x, y)}{(4+\d)(t+T)}}}{\sqrt{\left| B(x, \sqrt{t+T}) \right|\cdot \left| B(y,\sqrt{t+T}) \right|}}\left( d^2(y,x)+d^2(x,0) +T+1 \right)^d dy.
\endaligned
\end{equation}
It follows that
  \begin{equation}\label{eqn-HA29}\aligned
u^2(x, t)\leq & \frac{C\cdot 2^d (1+t+T)^m}{\sqrt{\left| B(x, \sqrt{t+T}) \right|}} \int \frac{e^{-\frac{d^2(x,y)}{(4+\d)(t+T)}}}{\sqrt{\left| B(y, \sqrt{t+T}) \right|}} d^{2d}(x,y)dy \\
& + \frac{C\cdot 2^d \left( d^{2d}(x, 0)+ (T+1)^d \right)(1+t+T)^m}{\sqrt{\left| B(x, \sqrt{t+T}) \right|}} \int \frac{e^{-\frac{d^2(x,y)}{(4+\d)(t+T)}}}{\sqrt{\left| B(y, \sqrt{t+T}) \right|}} dy.
\endaligned
\end{equation}
Let
  \begin{equation*}\aligned
I_1 :=  \frac{1}{\sqrt{\left| B(x, \sqrt{t+T}) \right|}} \int \frac{e^{-\frac{d^2(x,y)}{(4+\d)(t+T)}}}{\sqrt{\left| B(y, \sqrt{t+T}) \right|}} d^{2d}(x,y)dy.
\endaligned
\end{equation*}
By a simple analysis, we deduce
  \begin{equation}\label{eqn-HA30}\aligned
I_1 \leq \frac{C_{\d}^d(t+T)^d\cdot C}{\sqrt{\left| B(x, \sqrt{t+T}) \right|}} \int \frac{e^{-\frac{d^2(x,y)}{(4+2\d)(t+T)}}}{\sqrt{\left| B(y, \sqrt{t+T}) \right|}} dy,
\endaligned
\end{equation}
where $C_\d := \frac{1}{\frac{1}{4+\d}- \frac{1}{4+2\d}} = \frac{1}{\d}(4+\d)(4+2\d) > 0$.  %Here goes the proof.}

For any $z \in B(x, \sqrt{t+T})$,  $d(x, z) \leq \sqrt{t+T}$. Then we get
\[
d(y, z) \leq d(x,z) + d(x,y) \leq d(x,y) + \sqrt{t+T}.
\]
That is  $z\in B(y, \sqrt{t+T}+d(x,y))$.  Thus $ B(x, \sqrt{t+T}) \subset B(y, \sqrt{t+T}+d(x,y))$.  Hence by the  condition  (\ref{eqn-BA}),
  \begin{equation}\label{eqn-HA31}\aligned
\left| B(x, \sqrt{t+T}) \right| \leq & \left| B(y, \sqrt{t+T}+d(x,y)) \right|  \\
\leq & C\left( 1+\sqrt{t+T} +d(x, y) \right)^\a \cdot e^{\frac{1}{4+\e}\cdot \frac{\left( \sqrt{t+T}+d(x,y)\right)^2}{t+T}} \cdot \left| B(y, \sqrt{t+T}) \right|.
\endaligned
\end{equation}
It follows that for any $\eta>0$
  \begin{equation}\label{eqn-HA32}\aligned
\frac{1}{\sqrt{\left| B(y, \sqrt{t+T}) \right|}} \leq &  C\left( 1+\sqrt{t+T} +d(x, y) \right)^{\frac{\a}{2}}e^{\frac{1}{2(4+\e)}\cdot \frac{\left( \sqrt{t+T}+d(x,y)\right)^2}{t+T}} \cdot \frac{1}{\sqrt{\left| B(x, \sqrt{t+T}) \right|}} \\
\leq& C\left( 1+\sqrt{t+T} +d(x, y) \right)^{\frac{\a}{2}}e^{\frac{1+\eta}{2(4+\e)}\cdot \frac{d^2(x, y)}{t+T}}\cdot e^{\frac{1+\eta^{-1}}{2(4+\e)}} \cdot \frac{1}{\sqrt{\left| B(x, \sqrt{t+T}) \right|}}.
\endaligned
\end{equation}
Substituting (\ref{eqn-HA32}) into (\ref{eqn-HA30}),
  \begin{equation}\label{eqn-HA33}\aligned
I_1 \leq & \frac{C_{\d}^d(t+T)^d C\cdot C_{\e,\eta}}{\left| B(x, \sqrt{t+T}) \right|} \int \left( 1+\sqrt{t+T} +d(x, y) \right)^{\frac{\a}{2}} \cdot e^{-\left( \frac{1}{4+2\d} -\frac{1+\eta}{2(4+\e)} \right)\frac{d^2(x,y)}{t+T}}dy \\
\leq &  \frac{C_{\d}^d(t+T)^d C\cdot C_{\e,\eta}}{\left| B(x, \sqrt{t+T}) \right|} \left\{ \int Ce^{-\left( \frac{1}{4+2\d} -\frac{1+\eta}{2(4+\e)} \right)\frac{d^2(x,y)}{t+T}}dy \right. \\
& \left.  +C(\sqrt{t+T})^{\frac{\a}{2}} \int \left( 1+ \left( \frac{d(x,y)}{\sqrt{t+T}} \right)^{\frac{\a}{2}} \right)e^{-\left( \frac{1}{4+2\d} -\frac{1+\eta}{2(4+\e)} \right)\frac{d^2(x,y)}{t+T}}dy   \right\}.
\endaligned
\end{equation}
 Suppose that there exists $\gamma>0$ such that $\frac{1}{4+2\d} -\frac{1+\eta}{2(4+\e)} > \gamma$.
 Then by a simple analysis, we conclude
  \begin{equation}\label{eqn-HA34}\aligned
I_1 \leq \frac{C(\d, \e,  \eta, d)(t+T)^d \left( 1+(t+T)^{\frac{\a}{4}} \right)}{\left| B(x, \sqrt{t+T}) \right|} \int_M e^{- \gamma \cdot \frac{d^2(x,y)}{t+T}}dy.
\endaligned
\end{equation}
Since
  \begin{equation}\label{eqn-HA35}\aligned
\int_M e^{- \gamma \cdot \frac{d^2(x,y)}{t+T}}dy = & \sum_{i=1}^\infty \int_{B(x, 2^i \sqrt{t+T})\backslash B(x, 2^{i-1} \sqrt{t+T})} e^{- \gamma \cdot \frac{d^2(x,y)}{t+T}}dy \\
& + \int_{B(x, \sqrt{t+T})} e^{- \gamma \cdot \frac{d^2(x,y)}{t+T}}dy,
\endaligned
\end{equation}
 therefore we obtain
  \begin{equation}\label{eqn-HA36}\aligned
\frac{1}{\left| B(x, \sqrt{t+T}) \right|}\int_M e^{- \gamma \cdot \frac{d^2(x,y)}{t+T}}dy \leq \sum_{i=1}^\infty e^{- \gamma \cdot 2^{2(i-1)}} \cdot \frac{\left| B(x, 2^i\sqrt{t+T}) \right|}{B(x, \sqrt{t+T})} +1.
\endaligned
\end{equation}
From  (\ref{eqn-BA}),  we derive
  \begin{equation}\label{eqn-HA37}\aligned
 \frac{\left| B(x, 2^i\sqrt{t+T}) \right|}{\left| B(x, \sqrt{t+T})\right|} \leq C\left( 1+\left( 2^i \sqrt{t+T} \right)^\a \right) e^{\frac{1}{4+\e}\cdot (2^i)^2}.
\endaligned
\end{equation}
Substituting (\ref{eqn-HA37}) into (\ref{eqn-HA36}), we get
  \begin{equation}\label{eqn-HA38}\aligned
\frac{1}{\left| B(x, \sqrt{t+T}) \right|}\int_M e^{- \gamma \cdot \frac{d^2(x,y)}{t+T}}dy \leq & \sum_{i=1}^\infty C e^{-\left( \frac{ \gamma}{4} - \frac{1}{4+\e} \right) \cdot 2^{2i}} \cdot 2^{\a i} (t+T)^{\frac{\a}{2}} \\
& + \sum_{i=1}^\infty C e^{-\left( \frac{ \gamma}{4} - \frac{1}{4+\e} \right) \cdot 2^{2i}} +1.
\endaligned
\end{equation}
If
\[
\e_0:= \frac{ \gamma}{4} - \frac{1}{4+\e} > 0,
\]
then from (\ref{eqn-HA38}),  we obtain
  \begin{equation}\label{eqn-HA39}\aligned
\frac{1}{\left| B(x, \sqrt{t+T}) \right|}\int_M e^{-\gamma \cdot \frac{d^2(x,y)}{t+T}}dy \leq & \sum_{i=1}^\infty C e^{-\e_0 \cdot 2^{2i}} \cdot 2^{\a i} (t+T)^{\frac{\a}{2}} \\
& + \sum_{i=1}^\infty C e^{- \e_0\cdot 2^{2i}} +1.
\endaligned
\end{equation}
Since $\sum_{i=1}^\infty e^{-\e_0 \cdot 2^{2i}} \cdot 2^{\a i} $ and $\sum_{i=1}^\infty e^{-\e_0 \cdot 2^{2i}}$ are both convergent,  therefore we get
 \begin{equation}\label{eqn-HA40}\aligned
\frac{1}{\left| B(x, \sqrt{t+T}) \right|}\int_M e^{-\gamma \cdot \frac{d^2(x,y)}{t+T}}dy \leq & C_1(t+T)^{\frac{\a}{2}} +C_2.
\endaligned
\end{equation}
 Before proceeding further, we determine the relation between $\e$ and $\d$. The above computation shows that we require that there exist positive constants $\eta$, $\gamma$ and $\e_0$ such that
\begin{equation*}
 \begin{cases}
    \frac{1}{4+2\d} -\frac{1+\eta}{2(4+\e)} > \gamma>0,\\
    \e_0= \frac{ \gamma}{4} - \frac{1}{4+\e} > 0,\\
    \eta>0.
 \end{cases}
\end{equation*}
This can be achieved if
\begin{equation*}
    \frac{1}{4+2\d} -\frac{1}{2(4+\e)} > \frac{4}{4+\e},
\end{equation*}
which is equivalent to that
\begin{equation}\label{e-d-e-2}
    \e>9\d+14.
\end{equation}

Substituting (\ref{eqn-HA40}) into (\ref{eqn-HA34}),
  \begin{equation}\label{eqn-HA41}\aligned
I_1 \leq C(\d, \e, d)(t+T)^d \left( 1+(t+T)^{\frac{3\a}{4}} \right).
\endaligned
\end{equation}
Let
  \begin{equation*}\aligned
I_2 :=  \frac{1}{\sqrt{\left| B(x, \sqrt{t+T}) \right|}} \int \frac{e^{-\frac{d^2(x,y)}{(4+\d)(t+T)}}}{\sqrt{\left| B(y, \sqrt{t+T}) \right|}}dy.
\endaligned
\end{equation*}
From (\ref{eqn-HA32}), we derive
  \begin{equation}\label{eqn-HA42}\aligned
I_2 \leq & \frac{ C\cdot C_{\e, \eta}}{\left| B(x, \sqrt{t+T}) \right|} \int \left( 1+\sqrt{t+T} +d(x, y) \right)^{\frac{\a}{2}} \cdot e^{-\left( \frac{1}{4+\d} - \frac{1+\eta}{2(4+\e)} \right)\frac{d^2(x,y)}{t+T}}dy.
\endaligned
\end{equation}
Then by a similar proof to get (\ref{eqn-HA41}), we can conclude that
  \begin{equation}\label{eqn-HA43}\aligned
I_2 \leq & \frac{ C\cdot C_{\e, \eta} \left( 1+(t+T)^{\frac{\a}{4}} \right)}{\left| B(x, \sqrt{t+T}) \right|} \int_M  e^{-  \gamma \cdot \frac{d^2(x,y)}{t+T}}dy \\
\leq & C(\d, \e)  \left( 1+(t+T)^{\frac{3\a}{4}} \right).
\endaligned
\end{equation}
Combining (\ref{eqn-HA29}), (\ref{eqn-HA41}) with (\ref{eqn-HA43}), we obtain (\ref{eqn-HAC1}).

\bigskip

\noindent{\bf Step 3.} We shall give the estimate of the $k^{\rm th}$ time derivative of $u$.

 We fix $(x_0, t_0) \in M\times (-\infty,  0)$ and choose $T>0$ sufficiently large.  Then
\begin{equation}\label{eqn-HA44}\aligned
u(x_0, t_0) = \int G(x_0, t_0+T,  y) u(y, -T)dy
\endaligned
\end{equation}
and
  \begin{equation}\label{eqn-HA45}\aligned
\p^k_{t_0} u(x_0, t_0) = \int \p^k_{t_0} G(x_0, t_0+T,  y) u(y, -T)dy.
\endaligned
\end{equation}
Applying (\ref{eqn-HA25a}) on the above inequality (\ref{eqn-HA45}) with $t=t_0+T$,
  \begin{equation}\label{eqn-HA46}\aligned
\left| \p^k_{t_0} u(x_0, t_0) \right| \leq &  \frac{C\cdot C_{k,\e}\left( 1+(t_0+T)^{2m+ \frac{7\a}{4}} \right) }{(t_0+T)^k \sqrt{\left| B(x_0, \sqrt{t_0+T}) \right|}} \int \frac{e^{-\frac{d^2(x_0, y)}{(4+2\d)(t_0+T)}} }{ \sqrt{ \left| B(y, \sqrt{t_0+T}) \right|}} \\
& \cdot \left( 1+d^2(y,0)+T \right)^{\frac{d}{2}}dy.
\endaligned
\end{equation}
Consider
  \begin{equation}\label{eqn-HA47}\aligned
I =   \frac{1}{\sqrt{\left| B(x_0, \sqrt{t_0+T}) \right|}} \int \frac{e^{-\frac{d^2(x_0, y)}{(4+2\d)(t_0+T)}} }{ \sqrt{ \left| B(y, \sqrt{t_0+T}) \right|}}  \cdot \left( 1+d^2(y,0) +T \right)^{\frac{d}{2}}dy.
\endaligned
\end{equation}
 Direct computation similar to Step 2 gives us
  \begin{equation}\label{eqn-HA48}\aligned
I \leq &  \frac{ 2^{\frac{d}{2}}}{ \sqrt{\left| B(x_0, \sqrt{t_0+T}) \right|}} \int \frac{e^{-\frac{d^2(x_0, y)}{(4+2\d)(t_0+T)}} }{ \sqrt{ \left| B(y, \sqrt{t_0+T}) \right|}}d^d(x_0, y)dy \\
& + \frac{2^{\frac{d}{2}} \left( d^d(x_0, 0) + (T+1)^{\frac{d}{2}} \right)}{\sqrt{\left| B(x_0, \sqrt{t_0+T}) \right|}} \int  \frac{e^{-\frac{d^2(x_0, y)}{(4+2\d)(t_0+T)}} }{ \sqrt{ \left| B(y, \sqrt{t_0+T}) \right|}}dy.
\endaligned
\end{equation}
Let
  \begin{equation*}\aligned
\widetilde I_1 :=  \frac{1}{\sqrt{\left| B(x_0, \sqrt{t_0+T}) \right|}} \int \frac{e^{-\frac{d^2(x_0,y)}{(4+2\d)(t_0+T)}}}{\sqrt{\left| B(y, \sqrt{t_0+T}) \right|}} d^{d}(x_0,y)dy.
\endaligned
\end{equation*}
 Then following a similar argument as we obtain (\ref{eqn-HA30}),  we conclude
 \begin{equation}\label{eqn-HHA1}\aligned
\widetilde I_1 \leq \frac{\widetilde C_\d^{\frac{d}{2}}(t_0+T)^{\frac{d}{2}}\cdot C}{\sqrt{\left| B(x_0, \sqrt{t_0+T}) \right|}} \int \frac{e^{-\frac{d^2(x_0,y)}{(4+3\d)(t_0+T)}}}{\sqrt{\left| B(y, \sqrt{t_0+T}) \right|}}dy,
\endaligned
\end{equation}
where $\widetilde C_\d := \frac{1}{\frac{1}{4+2\d}- \frac{1}{4+3\d}} = \frac{1}{\d}(4+2\d)(4+3\d) > 0$.

  Next by a similar proof to get (\ref{eqn-HA33}),   we derive  for any $\tilde{\eta}>0$ that
 \begin{equation}\label{eqn-HHA2}\aligned
\widetilde I_1 \leq & \frac{\widetilde C_\d^{\frac{d}{2}}(t_0+T)^{\frac{d}{2}}C\cdot C_{\e,\tilde{\eta}}}{\left| B(x_0, \sqrt{t_0+T}) \right|} \int \left(1+ \sqrt{t_0+T} +d(x_0, y)  \right)^{\frac{\a}{2}}e^{-\left( \frac{1}{4+3\d} - \frac{1+\tilde{\eta}}{2(4+\e)}  \right)\frac{d^2(x_0,y)}{t_0+T}}dy \\
\leq &  \frac{\widetilde C_\d^{\frac{d}{2}}(t_0+T)^{\frac{d}{2}}C\cdot C_{\e,\tilde{\eta}}}{\left| B(x_0, \sqrt{t_0+T}) \right|} \left\{ \int C e^{-\left( \frac{1}{4+3\d} - \frac{1+\tilde{\eta}}{2(4+\e)} \right)\frac{d^2(x_0,y)}{t_0+T}}dy \right. \\
& \left.  + C \left( \sqrt{t_0+T} \right)^{\frac{\a}{2}} \int \left( 1+ \left( \frac{d(x_0,y)}{\sqrt{t_0+T}} \right)^{\frac{\a}{2}} \right)e^{-\left( \frac{1}{4+3\d} -\frac{1+\tilde{\eta}}{2(4+\e)} \right)\frac{d^2(x_0,y)}{t_0+T}}dy \right\}.
\endaligned
\end{equation}
 Suppose that there exists $\tilde\gamma>0$ such that $\frac{1}{4+3\d} -\frac{1+\tilde{\eta}}{2(4+\e)} > \tilde\gamma$. Then by a similar argument as we derive (\ref{eqn-HA34}),  we deduce
 \begin{equation}\label{eqn-HHA3}\aligned
\widetilde I_1 \leq \frac{C(\d,\e,\tilde{\eta},d)(t_0+T)^{\frac{d}{2}}\left( 1+(t_0+T)^{\frac{\a}{4}} \right)}{\left| B(x_0,  \sqrt{t_0+T}) \right|} \int_M e^{- \tilde\gamma \frac{d^2(x_0,y)}{t_0+T}}dy.
\endaligned
\end{equation}
 If
\[
\widetilde\e_0:= \frac{ \tilde\gamma}{4} - \frac{1}{4+\e} > 0,
\]
  then following  a similar  approach in obtaining  (\ref{eqn-HA40}), we can conclude that
 \begin{equation}\label{eqn-HHA4}\aligned
& \frac{1}{\left| B(x_0, \sqrt{t_0+T}) \right|}\int_M e^{- \tilde\gamma \cdot \frac{d^2(x_0,y)}{t_0+T}}dy \\
\leq & \sum_{i=1}^\infty C e^{-\widetilde\e_0 \cdot 2^{2i}} \cdot 2^{\a i} (t_0+T)^{\frac{\a}{2}}  + \sum_{i=1}^\infty C e^{- \widetilde\e_0\cdot 2^{2i}} +1 \\
 \leq & C_1(t_0+T)^{\frac{\a}{2}} +C_2.
\endaligned
\end{equation}
 As before, we require that there exist positive constants $\tilde\eta$, $\tilde\gamma$ and $\widetilde\e_0$ such that
\begin{equation*}
 \begin{cases}
    \frac{1}{4+3\d} -\frac{1+\tilde{\eta}}{2(4+\e)} > \tilde\gamma>0,\\
\widetilde\e_0= \frac{ \tilde\gamma}{4} - \frac{1}{4+\e} > 0,\\
    \tilde\eta>0.
 \end{cases}
\end{equation*}
This can be achieved if
\begin{equation*}
    \frac{1}{4+3\d} -\frac{1}{2(4+\e)} > \frac{4}{4+\e},
\end{equation*}
which is equivalent to that
\begin{equation}\label{e-d-e-3}
    \e>\frac{27}{2}\d+14.
\end{equation}

Substituting (\ref{eqn-HHA4}) into (\ref{eqn-HHA3}),  we obtain
  \begin{equation}\label{eqn-HHA5}\aligned
\widetilde I_1 \leq C(\d, \e, d)(t_0+T)^{\frac{d}{2}} \left( 1+(t_0+T)^{\frac{3\a}{4}} \right).
\endaligned
\end{equation}
Let
  \begin{equation*}\aligned
\widetilde I_2 :=  \frac{1}{\sqrt{\left| B(x_0, \sqrt{t_0+T}) \right|}} \int \frac{e^{-\frac{d^2(x_0,y)}{(4+2\d)(t_0+T)}}}{\sqrt{\left| B(y, \sqrt{t_0+T}) \right|}}dy.
\endaligned
\end{equation*}
then by a similar proof to get (\ref{eqn-HA43}),  we can deduce
  \begin{equation}\label{eqn-HHA5b}\aligned
\widetilde I_2 \leq C(\d, \e) \left( 1+(t_0+T)^{\frac{3\a}{4}} \right).
\endaligned
\end{equation}
Combining  (\ref{eqn-HA48}), (\ref{eqn-HHA5}) with (\ref{eqn-HHA5b}), it follows
  \begin{equation}\label{eqn-HHA6}\aligned
 I  \leq & C(\d, \e, d) (t_0+T)^{\frac{d}{2}}\left( 1+(t_0+T)^{\frac{3\a}{4}} \right) \\
&+ C(\d, \e, d) \left( d^d(x_0, 0) + ( T+1)^{\frac{d}{2}} \right)\left( 1+(t_0+T)^{\frac{3\a}{4}} \right).
\endaligned
\end{equation}
Substituting (\ref{eqn-HHA6}) into (\ref{eqn-HA46}),  we derive
\begin{equation}\label{eqn-HHA7}\aligned
\left| \p_{t_0}^k u(x_0,t_0) \right| \leq \frac{C(\d,\e,d,k)\left( 1+(t_0+T)^{2m+\frac{5}{2}\a} \right)\left( d^d(x_0, 0) + (t_0+T)^{\frac{d}{2}}+ (T+1)^{\frac{d}{2}} \right)}{\left(t_0+T\right)^k}.
\endaligned
\end{equation}
If $k> 2m+\frac{5}{2}\a+\frac{d}{2}$, we can let $T\to \infty$ to deduce $\p_{t_0}^k u(x_0, t_0)=0$.  Since $(x_0,t_0)$ is arbitrary,  we conclude that $u$ is a polynomial of $t$.  For $t_0 \ge 0$, as explained at the beginning of Step 2, we also have $\p_{t_0}^k u(x_0, t_0)=0$ by uniqueness. Alternatively we can also repeat Step 3 for $t_0>0$ using \eqref {eqn-HAC1} in Step 2. Hence $u$ is a polynomial of time $t$.
\qed

\bigskip

\noindent{\bf Proof of Theorem \ref{thm-2} (b).}
Let $u$ be as in part (a). Once we know it is a polynomial of the time variable $t$, as shown in \cite{LZ19},
\begin{equation}
\label{u=u0+uk}
u(x, t)=u_0(x) + u_1(x) t +... + u_{k}(x) t^{k},
\end{equation}  with
\begin{equation}
\label{equii1}
\Delta u_{i}(x) = (i+1) u_{i+1},
\end{equation} $i=0, ..., k-1$ and $\Delta u_k=0$. By the modified mean value inequality in Section 2, it is easy to see from \eqref{eqn-HA19} that $\partial^k_t u$, as a solution of the heat equation,  also has polynomial growth. Hence $u_k=  \, \partial^k_t u/k! $  has polynomial growth of a fixed degree. Note that, for $i=1, ..., k-1$, we also have
 \[
u_{k-1}= [(k-1)!]^{-1} \, \partial^{k-1}_t u - k t u_k .
 \]So $u_{k-1}$  has polynomial growth of a fixed degree. By induction we know that $u_i$ has polynomial growth of a fixed degree too for $i=0, 1, ..., k$. See also an argument in \cite{CM21} using the Vandermonde matrix. Using equation \eqref{equii1}, we know $u_i$ is the sum of one solution of \eqref{equii1} and harmonic functions, i.e.
 \[
 u_i=f_i + h_i,
 \]where $\Delta h_i=0$ and $\Delta f_i=(i+1) u_{i+1}$. Since $h_i$ is an arbitrary harmonic function and $f_i$ is fixed, we know that $f_i$ has the same order of bound as $u_i$ by choosing $h_i=0$. So $h_i$ has the same order of bound as $u_i$.
   So, if  the space of harmonic functions of polynomial growth of fixed degree has finite dimension, then so does the space of caloric functions. The converse is obvious. \qed

 To finish the section, we give an estimate on the volume of geodesic balls under the conditions of the theorem.

 By the assumption,
\begin{equation*}
\left| B(x, 2r) \right| \leq C(1+r)^\a \left| B(x, r) \right|
\end{equation*}
for any $x\in M$.  Thus we have, for $l=1, 2, 3, ...,$
\begin{equation*}\aligned
\left| B(x, 2^l r) \right| \leq & C(1+2^{l-1}r)^\a (1+2^{l-2}r)^\a \cdot \cdot\cdot (1+r)^\a \cdot  \left| B(x, r) \right|  \\
\leq & C \left( 1+ 2^{\frac{l(l+2)}{2}\a} r^{l\a} \right) \cdot \left| B(x, r) \right|.
\endaligned
\end{equation*}
Let $l$ be the least integer such that $R \le  2^l r$, then $l= [\log_2 \frac{R}{r}]$.
From the above inequality,  we deduce
\begin{equation}\label{eqn-L5b}\aligned
\left| B(x, R) \right| \leq & C\left( 1+ \left( \frac{R}{r} \right)^{\frac{1}{2}\a \left( \log_2 \frac{R}{r} -1 \right)}\cdot r^{\a\log_2 \frac{R}{r}} \right)\cdot \left| B(x, r) \right|.
\endaligned
\end{equation}
Taking $r=1$, we see that the volume of a ball of radius $R$ is allowed to grow like $R^{c \ln R}$ rate.

\vskip24pt

\section{Space of harmonic functions}

 As one application,
 in this section we revisit the question of the finite dimensionality of the space of harmonic functions of polynomial growth on open manifolds.

One key in the   widely used proof of finite dimensionality is the lower bound of the $L^2$ norm, in a smaller ball,  of finite sets of $L^2$ orthonormal harmonic functions in a bigger ball. The lower bound is a multiple of the cardinality of the set and some  constants.   See Lemma 2 in \cite{Li97}  (\cite{Lib} Lemma 28.3) and \cite{CM97}, Proposition 4.16 and more specifically statement (4.20) in the proof e.g.
For later use, we also make a  small generalization by cutting off a fixed ball in the domains of integrals. Although proof can be done in the same way, we present a somewhat more direct proof for completeness and for possible future applications.  This surgery procedure coupled with a quasi monotonicity argument will allow us to do away with local conditions on the manifold and just impose conditions on the ends of the manifold. This results in significant relaxation beyond doubling or mean value inequality, addressing a long time open question or plan. See Theorem \ref{thm-ys} and corollaries below. It is well known that non-trivial local topology  can have a big influence on the heat kernels and hence on the validity of mean value inequality.

\begin{pro}
\label{prcmli}
Let $(M, g)$ be an $n$-dimensional Riemannian manifold satisfying the volume condition: for a constant $\mu>0$,
\begin{equation*}
\left| B(p, r) \right| \leq C r^\mu
\end{equation*}
for some $p \in M$  and $r$ large. Let $K$ be any $k$-dimensional subspace of
\begin{equation*}
\mathcal{H}^d(M):= \{ u \, | \, \D u = 0,  |u(x)| \leq C(1+d(p,  x))^d \},
\end{equation*}
where $p\in M$ is a fixed point. For  any $\beta > 1, \d>0, R_0 \geq 1$, there exists $R>R_0$,  which may depend on $k$ and $R_0$,  such that if $\{ u_i \}_{i=1}^k$ is an orthonormal basis of $K$ w.r.t. the inner product
\begin{equation*}
A_{\beta R}(u, v)= \la u, v \ra_{\beta R} = \int_{B(p, \b R) \backslash B(p, R_0)} u v dx,
\end{equation*}
then
\begin{equation}\label{eqn-L1-2}
\sum_{i=1}^k \int_{B(p, R) \backslash B(p, R_0)} u_i^{2} dx \geq k \b^{-(2d+ \mu+ \d) }.
\end{equation}
\end{pro}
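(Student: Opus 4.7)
The plan is to reformulate $\sum_i \int_{B(p,R)\setminus B(p,R_0)} u_i^2$ as the trace of a product of Gram matrices in a single fixed basis of $K$, and then extract the desired lower bound by combining the arithmetic--geometric mean inequality with a pigeonhole argument on determinants, using the polynomial volume bound and polynomial pointwise growth only to control an upper Hadamard estimate.

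First, fix once and for all a basis $\{f_1,\ldots,f_k\}$ of $K$, and for each $R>R_0$ set
\[
A(R)_{ij}:=\int_{B(p,R)\setminus B(p,R_0)} f_i f_j\, dx,\qquad F(R):=\det A(R).
\]
Since $M$ is connected and harmonic functions enjoy unique continuation, the $f_i$ remain linearly independent on every nonempty open set, so $A(R)$ is positive definite and $F(R)>0$ for all $R>R_0$. If $\{u_i\}$ is any basis of $K$ orthonormal with respect to $A_{\beta R}$, writing $u_i=\sum_j P_{ji} f_j$ one has $P^{\top} A(\beta R) P = I$, whence
\[
\sum_{i=1}^k \int_{B(p,R)\setminus B(p,R_0)} u_i^2 \, dx = \mathrm{tr}\bigl(P^{\top} A(R) P\bigr) = \mathrm{tr}\bigl(A(R)\, A(\beta R)^{-1}\bigr),
\]
the right-hand side being basis-independent. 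The matrix $A(R) A(\beta R)^{-1}$ is similar to the positive definite matrix $A(\beta R)^{-1/2} A(R) A(\beta R)^{-1/2}$, so AM--GM gives
\[
\mathrm{tr}\bigl(A(R) A(\beta R)^{-1}\bigr) \ge k\bigl[\det\bigl(A(R) A(\beta R)^{-1}\bigr)\bigr]^{1/k} = k\bigl(F(R)/F(\beta R)\bigr)^{1/k}.
\]
Thus it suffices to produce some $R>R_0$ at which $F(R)/F(\beta R) \ge \beta^{-k(2d+\mu+\delta)}$.

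To that end, combine $|f_i(x)|\le C(1+d(p,x))^d$ with $|B(p,R)|\le CR^\mu$ to get the uniform diagonal bound
\[
A(R)_{ii} \le \sup_{B(p,R)} f_i^2 \cdot |B(p,R)| \le C(1+R)^{2d} R^\mu \le C R^{2d+\mu},
\]
so Hadamard's inequality for positive definite matrices yields $F(R) \le \prod_i A(R)_{ii} \le C R^{k(2d+\mu)}$ for all large $R$. Fix now any $R_1 > R_0$, so $F(R_1)>0$. If the desired ratio bound were to fail at every $R=\beta^j R_1$, $j=0,1,2,\ldots$, iterating would give
\[
F(\beta^j R_1) > \beta^{jk(2d+\mu+\delta)} F(R_1) = \beta^{jk\delta} \cdot \beta^{jk(2d+\mu)}\, F(R_1),
\]
which contradicts the Hadamard upper bound $F(\beta^j R_1)\le C(\beta^j R_1)^{k(2d+\mu)}$ once $j$ is large, since $\beta^{jk\delta}\to\infty$. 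Hence some $R=\beta^j R_1$ satisfies the required ratio estimate, and together with the AM--GM step above this yields \eqref{eqn-L1-2}.

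The main obstacle, such as it is, lies in isolating the right reformulation $\mathrm{tr}(A(R)A(\beta R)^{-1})$ and identifying $F(R)=\det A(R)$ as the natural monotone quantity on which to run a pigeonhole; once that is done, the polynomial volume hypothesis and the polynomial growth of $f_i$ feed directly into the Hadamard upper bound, and the exponent gain $\beta^{-k\delta}$ per step in the contradiction is precisely the slack produced by the $+\delta$ in the statement. No further analytic machinery beyond unique continuation and elementary linear algebra is needed.
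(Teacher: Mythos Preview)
Your proof is correct and follows essentially the same route as the paper: form Gram matrices at dyadic scales, bound their determinants above via the polynomial growth and volume hypotheses, run a pigeonhole on the determinant ratio, and convert to a trace lower bound via AM--GM. Your explicit identification of $\sum_i \int u_i^2$ with the basis-independent quantity $\mathrm{tr}\bigl(A(R)A(\beta R)^{-1}\bigr)$ and your invocation of Hadamard's inequality make transparent a couple of steps the paper leaves implicit, but the argument is the same.
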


\proof  For a large positive integer $J$, let us write $D_J=B(p, \b^J) \backslash B(p, R_0)$ and introduce the $k \times k$ positive definite matrix
\[
M_J = (\la u_i, u_j \ra_{\beta^J}) \equiv \left( \int_{D_J} u_i u_j dx \right).
\]By the growth condition on $u_i$ and geodesic balls, we have
\begin{equation}
\label{detmj<}
 \det M_J  \le C \beta ^{k J (2d+\mu)} k!.
\end{equation} We argue that for any positive integer $k$, there exists $J=J(k, \beta, \delta, \mu, d)$ such that
\begin{equation}
\label{detmj+1<}
 \det M_{J+1} \le  \det M_J \, \beta ^{k  (2d+\mu+\delta)}.
\end{equation} If this statement were not true, then there exists one integer $k$ such that for all $J=1, 2, 3, ...$, the following holds
\[
 \det M_{J+1} >  \det M_J \, \beta^{k  (2d+\mu+\delta)}.
\]Putting together, these would imply, for all $J$ that
\[
\det M_{J} >  \det M_1 \, \beta^{k (J-1) (2d+\mu+\delta)}.
\]
But this contradicts with \eqref{detmj<} when $J$ is large. Hence \eqref{detmj+1<} is true. Now we can normalize to assume that $\{ u_i \}$ are orthonormal vectors under the inner product $\la \cdot, \cdot \ra_{\beta^{J+1} }$ so that $\det M_{J+1}=1$. Applying the arithmetic-geometric mean inequality on \eqref{detmj+1<}, we obtain
\[
k \le \text{trace} M_J \beta^{2d+\mu+\delta}.
\]The proof is done by renaming $\beta^J $ as $R$.
\qed

The dependence of the radius $R$ on $k$ was not written out in the above cited references. We will show that this dependence is necessary.
In the next two propositions, we will show that the following stronger assertion, which would drastically extend the finite dimensionality result,  is false in general. \\

\noindent{\it {\bf Assertion 4.1}
Let $(M, g)$ be an $n$-dimensional Riemannian manifold satisfying the volume condition: for a constant $\mu>0$,
\begin{equation*}
\left| B(p, r) \right| \leq C r^\mu
\end{equation*}
for some $p \in M$  and $r$ large. Let $K$ be any $k$-dimensional subspace of
\begin{equation*}
\mathcal{H}^d(M):= \{ u \, | \, \D u = 0,  |u(x)| \leq C(1+d(p,  x))^d \},
\end{equation*}
where $p\in M$ is a fixed point. For  any $\beta > 1, \d>0, R_0 \geq 1$, there exists $R>R_0$,  which is independent of $k$,  such that if $\{ u_i \}_{i=1}^k$ is an orthonormal basis of $K$ w.r.t. the inner product
\begin{equation*}
A_{\beta R}(u, v) = \int_{B(p, \b R)} uv dx,
\end{equation*}
then
\begin{equation}\label{eqn-L1}
\sum_{i=1}^k \int_{B(p, R)} u_i^{2} dx \geq k \b^{-(2d+ \mu+ \d) }.
\end{equation}

\begin{pro}
\label{pr4.1}

Let $(M, g)$ be a complete Riemannian manifold  satisfying,  for all harmonic functions $u$,  all $p \in M$ and any $R>0$, that
\begin{equation}\label{eqn-L9}\aligned
|u(p)|^2 \leq C \frac{(1+R)^{m_0}}{\left| B(p, R)\right|}\int_{B(p,R)}|u|^2 dx
\endaligned
\end{equation}
and
\begin{equation}\label{eqn-L10}\aligned
\left| B(p, 2R) \right| \leq C(1+R)^\a \left| B(p, R) \right|.
\endaligned
\end{equation} Here $m_0, \alpha \ge 0$ are any fixed numbers.
Let $K$ be any $k$-dimensional subspace of $\mathcal{H}^d(M)$. Let $\{u_i\}_{i=1}^k$ be a basis for $K$.  For $p\in M,  R\geq 1,  \e\in (0, \beta)$,  we have
\begin{equation}\label{eqn-L11}\aligned
& \sum_{i=1}^k \int_{B(p,R)} u_{i}^2(x)dx  \\
\leq &   C (1+R)^{m_0} \left( 1+ \left( \frac{2}{\e} \right)^{\frac{\a}{2}\left( \log_2 \frac{2}{\e}-1 \right)}\left((1+\e)R\right)^{\a\log_2 \frac{2}{\e}}\right) \sup_{u\in \la A, U \ra}\int_{B(p, (1+\e)R)}u^2 dx.
\endaligned
\end{equation}
Here $\la A, U \ra := \left\{ \left.  v=\sum_i a_i u_i \right|\sum_i a_{i}^2 = 1  \right\}$.

\end{pro}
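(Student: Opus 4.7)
The plan is to combine a pointwise Cauchy--Schwarz identity with the mean value inequality \eqref{eqn-L9}, and then absorb the remaining geometric factor via the iterated volume doubling estimate \eqref{eqn-L5b} derived at the end of Section~3. The guiding observation is the pointwise identity
\[
\sum_{i=1}^k u_i^2(x) = \sup_{\sum_i a_i^2 = 1} \Bigl(\sum_{i=1}^k a_i u_i(x)\Bigr)^2,
\]
which follows from Cauchy--Schwarz applied to the vectors $(a_1,\dots,a_k)$ and $(u_1(x),\dots,u_k(x))$, with equality at $a_i = u_i(x)\bigl(\sum_j u_j^2(x)\bigr)^{-1/2}$. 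Thus, to bound $\sum_i u_i^2(x)$ it suffices to estimate $v^2(x)$ uniformly over $v = \sum_i a_i u_i \in \langle A, U\rangle$.

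Each such combination $v$ is harmonic on $M$, so applying \eqref{eqn-L9} at $x$ with radius $\varepsilon R$ and using the inclusion $B(x, \varepsilon R) \subset B(p, (1+\varepsilon)R)$ valid for $x \in B(p, R)$ gives
\[
v^2(x) \le C\,\frac{(1+\varepsilon R)^{m_0}}{|B(x, \varepsilon R)|} \int_{B(p, (1+\varepsilon)R)} v^2\, dy.
\]
Taking the supremum over $v \in \langle A, U\rangle$ and inserting the identity above yields the pointwise bound
\[
\sum_{i=1}^k u_i^2(x) \le C\,\frac{(1+R)^{m_0}}{|B(x, \varepsilon R)|} \sup_{v \in \langle A, U\rangle} \int_{B(p, (1+\varepsilon)R)} v^2\, dy.
\]
Integrating in $x$ over $B(p, R)$ reduces the proposition to estimating the purely geometric quantity $\int_{B(p, R)} |B(x, \varepsilon R)|^{-1}\, dx$.

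For this last integral I would use that $B(p, R) \subset B(x, 2R)$ when $x \in B(p, R)$, so $|B(p, R)| \le |B(x, 2R)|$. Then the iterated doubling estimate \eqref{eqn-L5b}, applied with outer radius $2R$ and inner radius $\varepsilon R$ (ratio $2/\varepsilon$), gives
\[
|B(x, 2R)| \le C\Bigl(1 + (2/\varepsilon)^{\frac{\alpha}{2}(\log_2(2/\varepsilon) - 1)} \bigl((1+\varepsilon)R\bigr)^{\alpha \log_2(2/\varepsilon)}\Bigr) |B(x, \varepsilon R)|,
\]
where I weakened $\varepsilon R$ to $(1+\varepsilon)R$ in the base to match the factor displayed in \eqref{eqn-L11}. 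Consequently $1/|B(x, \varepsilon R)|$ is bounded by the same bracket divided by $|B(p, R)|$, and integrating over $B(p, R)$ contributes precisely the bracketed factor. Combining with the pointwise bound produces \eqref{eqn-L11}.

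The only delicate step is the volume ratio $|B(x, 2R)|/|B(x, \varepsilon R)|$; the sup--swap identity and the mean value step are both standard. Because the doubling exponent $\alpha$ may be nonzero, iterating \eqref{eqn-L10} over roughly $\log_2(2/\varepsilon)$ scales is forced to generate the super-polynomial factor $(2/\varepsilon)^{c\alpha \log_2(2/\varepsilon)}$. This explains why the $\varepsilon$-dependence in \eqref{eqn-L11} is genuinely worse than polynomial, and this super-polynomial dependence is what one should expect to exploit later in the section when constructing counterexamples to Assertion~4.1.
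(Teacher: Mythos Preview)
Your proof is correct and follows essentially the same route as the paper: reduce $\sum_i u_i^2(x)$ to a single harmonic function in $\langle A,U\rangle$, apply the weighted mean value inequality on a ball contained in $B(p,(1+\e)R)$, then control the resulting volume ratio via the iterated doubling estimate \eqref{eqn-L5b}. The only cosmetic differences are that you phrase the reduction as the Cauchy--Schwarz identity $\sum_i u_i^2(x)=\sup_{\sum a_i^2=1}(\sum a_i u_i(x))^2$ where the paper uses the equivalent rotation/codimension-one argument, and you apply \eqref{eqn-L9} with the fixed radius $\e R$ rather than the paper's variable radius $(1+\e)R-r(x)$; both choices lead to the same final bound.
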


\begin{proof}

The point is that the right hand side does not have $k$. This is a generalization of known results in \cite{Li97} and \cite{CM98} beyond volume doubling, which may be of independent interest.

Fix $x\in B(p, R)$,  set $K_x:= \left\{ u\in K | u(x) = 0 \right\}$.  The subspace $K_x \subset K$ is at most codimension 1, since for any  $v, w \notin K_x$, $v-\frac{v(x)}{w(x)}w \in K_x$.  Therefore there exists an orthonormal transformation mapping $\{u_i\}_{i=1}^k$ to $\{v_i\}_{i=1}^k$, where $v_i \in K_x, i\geq 2$.

By the assumption (\ref{eqn-L9}),
\begin{equation}\label{eqn-L12}\aligned
\sum_{i=1}^k u_{i}^2(x) = \sum_{i=1}^k v_{i}^2(x) = v_1^{2}(x) \leq C(1+R)^{m_0} \fint_{B(x, (1+\e)R-r(x))} v_1^{2} dy.
\endaligned
\end{equation}  Here and later $r(x)=d(p, x)$.
For any $y \in B(p, R)$, $d(p, y)\leq R$,  then by the triangle inequality,
\[
d(y,x)\leq d(p,y)+d(p,x)\leq R +r(x) \leq 2R.
\]
This implies $y\in B(x, 2R)$.  Thus $B(p,R)\subset B(x, 2R)$ and
\begin{equation}\label{eqn-L13}\aligned
\left| B(p,R) \right| \leq  \left| B(x, 2R) \right|.
\endaligned
\end{equation}
By (\ref{eqn-L5b}),  we obtain
\begin{equation}\label{eqn-L14}\aligned
\left| B(x, 2R) \right| \leq & C\left( 1+ \left( \frac{2R}{(1+\e)R-r(x)}\right)^{\frac{\a}{2}\left( \log_2 \frac{2R}{(1+\e)R-r(x)}-1 \right)} \right.\\
&\left.  \cdot \left(( 1+\e)R-r(x) \right)^{\a \log_2 \frac{2R}{(1+\e)R-r(x)}} \right) \cdot \left| B(x, (1+\e)R-r(x)) \right|.
\endaligned
\end{equation}
Notice  $r(x)\leq R$,  then $(1+\e)R -r(x) \geq \e R$,  that is,
\[
\frac{2R}{(1+\e)R-r(x)} \leq \frac{2}{\e}.
\]
From (\ref{eqn-L13}) and (\ref{eqn-L14}), we derive
\begin{equation}\label{eqn-L15}\aligned
\left| B(p, R) \right| \leq C\left( 1+ \left( \frac{2}{\e} \right)^{\frac{\a}{2}\left( \log_2 \frac{2}{\e}-1 \right)}((1+\e)R)^{\a\log_2 \frac{2}{\e}} \right) \cdot \left| B(x, (1+\e)R-r(x)) \right|.
\endaligned
\end{equation}
Namely,
\begin{equation}\label{eqn-L16}\aligned
\left| B(x, (1+\e)R-r(x)) \right|^{-1} \leq C \left( 1+ \left( \frac{2}{\e} \right)^{\frac{\a}{2}\left( \log_2 \frac{2}{\e}-1 \right)}((1+\e)R)^{\a\log_2 \frac{2}{\e}} \right) \cdot \left| B(p, R) \right|^{-1}.
\endaligned
\end{equation}
Notice that
\begin{equation}\label{eqn-L17}
 B(x, (1+\e)R-r(x)) \subset B(p, (1+\e)R).
\end{equation}
Hence by (\ref{eqn-L12}),  (\ref{eqn-L16}) and (\ref{eqn-L17}),   we can integrate to arrive at
 \begin{equation}\label{eqn-L18}\aligned
& \sum_{i=1}^k \int_{B(p, R)} u_{i}^2(x)dx \\
\leq & C \int_{B(p, R)} \left| B(x, (1+\e)R-r(x)) \right|^{-1} (1+R)^{m_0}  \sup_{u\in \la A, U \ra}\int_{B(p, (1+\e)R)}u^2 dy dx\\
\leq & C (1+R)^{m_0} \left( 1+ \left( \frac{2}{\e} \right)^{\frac{\a}{2}\left( \log_2 \frac{2}{\e}-1 \right)}\left((1+\e)R\right)^{\a\log_2 \frac{2}{\e}}\right) \sup_{u\in \la A, U \ra}\int_{B(p, (1+\e)R)}u^2 dy.
\endaligned
\end{equation}
\end{proof}

\begin{pro}
\label{pro4.2} Assertion 4.1 is false in general.
\end{pro}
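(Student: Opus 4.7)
My plan is to proceed by contradiction: show that Assertion 4.1, combined with Proposition~\ref{pr4.1}, would force $\mathcal{H}^d(M)$ to be finite dimensional for every manifold satisfying \eqref{eqn-L9} and \eqref{eqn-L10}; then exhibit a manifold on which those two hypotheses hold but $\mathcal{H}^d(M)$ is infinite dimensional.

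First I would fix $\beta>1$, $\delta>0$, $R_0\geq 1$ and any $\epsilon\in(0,\beta-1)$, and let $R=R(\beta,\delta,R_0,\epsilon)$ be the radius supplied by Assertion 4.1. For each $k$ and each $k$-dimensional $K\subset\mathcal{H}^d(M)$ with $A_{\beta R}$-orthonormal basis $\{u_i\}_{i=1}^k$, any $v=\sum a_iu_i$ with $\sum a_i^2=1$ obeys $\int_{B(p,(1+\epsilon)R)}v^2\,dx\leq \int_{B(p,\beta R)}v^2\,dx=1$ because $B(p,(1+\epsilon)R)\subset B(p,\beta R)$. Proposition~\ref{pr4.1} then yields
$$\sum_{i=1}^k\int_{B(p,R)}u_i^2\,dx\leq C_*(R,\epsilon,\alpha,m_0),$$
an upper bound completely independent of $k$, while Assertion 4.1 asserts the same sum is at least $k\,\beta^{-(2d+\mu+\delta)}$. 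These two inequalities together force $k\leq C_*\,\beta^{2d+\mu+\delta}$ uniformly across all finite-dimensional subspaces of $\mathcal{H}^d(M)$; hence $\dim\mathcal{H}^d(M)<\infty$.

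The second and harder step is to exhibit a concrete manifold $(M,g)$ on which \eqref{eqn-L9} and \eqref{eqn-L10} both hold for some nonnegative $m_0$, $\alpha$ but for which $\mathcal{H}^d(M)$ is infinite dimensional for some $d\geq 0$. The construction I would pursue is to glue infinitely many ends onto a background such as $\mathbb{R}^n$ at sparse scales $R_j\uparrow\infty$, each end contributing an independent bounded harmonic function via the standard multi-end construction of functions with prescribed limits at infinity on each end. Choosing $R_j$ to grow rapidly enough that the ends accumulate only at a $(\log R)^c$ rate, the iteration \eqref{eqn-L5b} gives the weak doubling \eqref{eqn-L10} with some $\alpha>0$. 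The polynomial-weighted mean value inequality \eqref{eqn-L9} would then be verified through the equivalence, recalled after Theorem~\ref{thm-2}, between polynomial-weighted heat kernel upper bounds and polynomial-weighted mean value inequalities, invoking the slow heat-kernel bounds of \cite{BCG01} and \cite{Cho84} which apply under local bounded geometry together with the imposed polynomial control on the volume.

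The main obstacle is precisely this last construction: it must simultaneously preserve \eqref{eqn-L10} in the large, supply infinitely many independent harmonic functions of fixed polynomial growth, and retain a mean value inequality with merely a polynomial weight. The tension is that too many ends inflate the volume beyond the permitted quasi-doubling rate and can even destroy the mean value bound, whereas too few leave $\mathcal{H}^d(M)$ finite dimensional; the balance between the number of ends attached before radius $R$ and the growth of $R_j$ must therefore be calibrated carefully, and this is where the bulk of the work will lie.
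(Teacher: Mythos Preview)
Your first step---combining Assertion 4.1 with Proposition~\ref{pr4.1} to bound $k$ independently of $k$ and thereby force $\dim\mathcal{H}^d(M)<\infty$---is correct and is exactly what the paper does.

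The divergence is in the counterexample. You propose to manufacture infinitely many harmonic functions by attaching infinitely many ends at sparse scales, and you correctly flag the balancing act (too many ends kills \eqref{eqn-L10} and the mean value bound, too few gives only finite-dimensional $\mathcal{H}^d$) as the unresolved core. The paper avoids this difficulty entirely by using a single rotationally symmetric surface: topologically $\mathbb{R}^2$ with metric $g=dr^2+f^2(r)\,d\theta^2$, $f(r)=r\ln^2 r$ for $r>e$. The radial curvature is then $-f''/f\le -2/(r^2\ln r)$, which is precisely the threshold in Choi's theorem \cite{Cho84} guaranteeing solvability of the Dirichlet problem at infinity, so $\dim\mathcal{H}^0(M)=\infty$ immediately. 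The volume of balls is squeezed between $cr^2$ and $Cr^2\ln^2 r$ by direct computation and Bishop--Gromov (the curvature decays faster than $-C/r^2$), giving \eqref{eqn-L10} with $\alpha=\epsilon_0$ arbitrarily small; the heat kernel bound then comes from \cite{BCG01} and feeds into Theorem~\ref{thmmmvi} to give \eqref{eqn-L9}.

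So your proposal has a genuine gap: the multi-end construction is only sketched, and the tension you identify is real and not obviously resolvable---in particular, verifying a polynomial-weighted mean value inequality on a manifold with an unbounded number of ends is delicate, since the equivalence with heat kernel bounds you cite runs through \cite{BCG01}, which needs a uniform lower bound $|B(x,r)|\ge cr^{D_1}$ that your sparse-end geometry must also respect. The paper's rotationally symmetric example sidesteps all of this: one end, explicit curvature, and the infinite-dimensionality of $\mathcal{H}^d$ comes for free from an off-the-shelf theorem rather than from a bespoke construction.
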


\begin{proof}

Let us consider the manifold $(M, g)$ which is topological $\mathbb{R}^2$ equipped with the following metric in geodesic polar coordinates:
\[
g=dr^2 + f^2( r ) d\theta^{2} ,
\] where
\begin{equation}
\label{fr}
f=f(r)=
\begin{cases}
\, r \, \ln^2 r, \qquad r >e,\\
\, \text{smooth}, \qquad 1<r \leq  e,\\
\, r, \qquad 0 \le r \le 1.
\end{cases}
\end{equation} It is easy to check that for $r>e$,  the radial curvature is
\[
-\frac{f''(r)}{f(r)} = -\frac{2}{r^2 \ln r}-\frac{2}{r^2 \ln^2 r} \le -\frac{2}{r^2 \ln r}.
\]Hence by Theorem 3.6 in \cite{Cho84}, the Dirichlet problem at infinite is solvable, consequently
\begin{equation}
\label{hdinfty}
{\rm dim } \mathcal{H} ^d(M)=\infty.
\end{equation}

Now let us check that $(M, g)$ satisfies the assumptions in the assertion and Proposition \ref{pr4.1}.
First we note, for any small $\epsilon_0>0$,
\begin{equation}
\label{bxrln}
|B(x, r)| \le C r^2 ln^2 r \le C r^{2+\epsilon_0}, \quad r> e , \, x \in M.
\end{equation} This is checked as follows. If $r>d(x, 0)/2$, then
\[
|B(x, r)| \le |B(0, 4r)|= 2 \pi \int^{4r}_0 f(r) dr \le c r^2 \ln^2 r.
\]If $r \le d(x, 0)/2$, then in $B(x, r)$ the curvature  is bounded from below by $-K \equiv -c/(r^2 \ln r)$ and the Bishop-Gromov volume comparison theorem implies, for large $r$,
\[
\begin{aligned}
|B(x, r)| &\le |B_{\mathbb{H}^2_K}(r)|=\frac{2\pi}{\sqrt{K}} \int^r_0 \sinh (\sqrt{K} t) dt
 =\frac{2 \pi}{2 K}(e^{\sqrt{K} r} + e^{-\sqrt{K} r} -2)\\
&= c^{-1} r^2 \ln r \left ( e^{\sqrt{c}/(\sqrt{\ln r})} + e^{-\sqrt{c}/(\sqrt{\ln r})}  - 2 \right) \le C r^2.
\end{aligned}
\]

Second, the curvature is negative outside the fixed ball $B(0, e)$, by volume comparison theorem
\begin{equation}
\label{vol>r2}
|B(x, r)|> c r^2.
\end{equation}  To verify this, one can just consider similarly  three cases as above: $ 2e< r \le d(x, 0)/2$,  $d(x, 0)/2<r \le 2 d(x, 0)$ and $r> 2d(x, 0)> 8e$. In the first case one apply volume comparison directly since the curvature is negative there.  In the second case we have $B(x, (d(x, 0)/2)-e) \subset B(x, r)$ and hence
\[
|B(x, r)| \ge |B(x, (d(x, 0)/2)-e)| \ge c (d(x, 0)/2)-e)^2 \ge 0.01 c \, r^2.
\]Here we just applied volume comparison on the ball $B(x, (d(x, 0)/2)-e)$ where the curvature is negative. In the third case we have
$B(x, r) \supset B(0, r/2)$ and we can use the metric to compute $|B(0, r/2)| \ge c r^2 \ln^2 r$.

Therefore, we have polynomial growth for volume and weighted volume doubling:
\[
|B(x, 2r)| \le C (1+r^{\epsilon_0}) |B(x, r)|.
\]Note that this manifold is not volume doubling for some off center balls. One can take
$r=d(x, 0)>>1$  and consider $B(x, r/2)$. By volume comparison, as computed above, due to the faster than quadratic decay of the curvature,  $|B(x, r/2)|$ is comparable to $r^2$.  But one computes directly $|B(x, 4 r)| > |B(0, r)|$ which is like $r^2 \ln^2 r$, and
hence the manifold is not volume doubling.

 The injectivity radius is bounded from below by a positive number since we have bounded curvature and non-collapsed metric. This and the volume conditions \eqref{vol>r2} and \eqref{bxrln} allow us to use Theorem 1.1 in \cite{BCG01}, which says the heat kernel
satisfies
\[
G(x, t, y) \le \frac{C}{t^q} e^{- c d^2(x, y)} = \frac{C t^{2+\epsilon_0-q} }{t^{2+\epsilon_0}} e^{- c d^2(x, y)}
\le \frac{C t^{2+\epsilon_0-q} }{\sqrt{|B(x, \sqrt{t})|} \, \sqrt{|B(y, \sqrt{t})|} } e^{- c d^2(x, y)}
\]for some $q>0$ and all large $t$. Now we can use Theorem \ref{thmmmvi} to conclude that the modified mean value inequality \eqref{eqn-L9} is true.  Therefore Proposition \ref{pr4.1} can be applied since assumptions (\ref{eqn-L9}) and (\ref{eqn-L10}) are met.

Suppose Assertion 4.1 is true. Then by
 Proposition \ref{pr4.1},   we have
\[
\dim \mathcal{H}^d(M) < \infty.
\]
Indeed, let $k$ be the dimension of any finite dimensional subspace $K$ of $\mathcal{H}^d(M)$ and $\{ u_i \}_{i=1}^k$ be an orthonormal basis of $K$ w.r.t. $A_{\b R}$ given in the assertion. We have, by the assertion and the previous proposition with $\e=\beta-1$, that
\begin{equation*}\label{eqn-L19}\aligned
& k \b ^{-(2d+\mu+\delta)} \leq  \sum_{i=1}^k \int_{B(p, R)} u_i^{2} dx  \\
\leq & C (1+R)^{m_0} \left( 1+ \left( \frac{2}{\beta-1} \right)^{\frac{\a}{2}\left( \log_2 \frac{2}{\beta-1}-1 \right)}\left(\beta R\right)^{\a\log_2 \frac{2}{\beta-1}}\right) \sup_{u\in \la A, U \ra}\int_{B(p, \beta R)} u^2 dx.
\endaligned
\end{equation*} Here $\mu=2+\epsilon_0$ and $\alpha=\epsilon_0$.
Since $\{u_i\}_{i=1}^k$ is orthonormal in $A_{\b R}$, then we obtain
\[
\sup_{u\in \la A, U \ra}\int_{B(p, \beta R)}u^2 dx \leq 1.
\]
Hence we have
\[
k\leq C (1+R)^{m_0} \left( 1+ \left( \frac{2}{\beta-1} \right)^{\frac{\a}{2}\left( \log_2 \frac{2}{\beta-1}-1 \right)}\left(\beta R\right)^{\a\log_2 \frac{2}{\beta-1}}\right)\cdot  \b R^{2d+\mu+\d } < \infty.
\]

But this contradicts \eqref{hdinfty}.

\end{proof}

\medskip

 Remark.  One can choose an everywhere  explicit metric in the example in \eqref{fr} so that the curvature is negative everywhere. Take $f=f(r)=r \ln^2(e+r^2)$. Direct calculation shows
\[
f''(r)= \frac{4r^3\ln \left(r^2+ e \right)+12 e r \ln \left(r^2+ e \right)+8r^3}{\left(r^2+ e \right)^2}
\]so that the curvature $K(r)=-f''(r)/f(r)$ is
\[
K(r)= - \frac{4r^2+12 e}{(e+ r^2)^2 \ln \left(r^2+ e \right)} - \frac{8r^2}{\left(r^2+ e \right)^2 \ln^2 \left(r^2+ e \right)}.
\] It is clear that for large $r$, we have
\[
K(r) \le  -\frac{1.5}{r^2 \ln r}
\]and all other argument in the above proposition goes through in the same way.

\medskip

 For this example, we have observed in \eqref{hdinfty} that dim $H^d(M)=\infty$ for all $d \ge 0$.  Since the volume condition of the large geodesic balls in the manifold and the mean value inequality  are just ``off" from the Euclidean ones by no more than $log^2$ factor, this example indicates the obstruction to the
 proposal of relaxing the volume doubling condition or the mean value inequality in \cite{CM98} p118. Rather it shows that the conditions inferring the finite dimensionality results in \cite{CM97}, \cite{Li97} and \cite{CM98} are essentially sharp in the sense that they can not be improved by a large extent. It also shows that $dim \mathbf{K}^d$ in Theorem \ref{thm-2}  can not be finite  in general.

\medskip

 On the other hand, we conclude the paper with a finite dimensionality result beyond the standard condition of volume doubling and mean value inequality. It covers manifolds such as the connected sum of $\mathbb{R}^2$ with a cylinder discussed in the introduction and its higher dimensional analogue such as the connected sum of $\mathbb{R}^4$ with $\mathbb{R}^3 \times \mathbb{S}^1$ proposed in \cite{CM98} p117-118. Roughly speaking the standard conditions are imposed on large dyadic annulus instead of on the whole manifolds, thus significantly extending the coverage of manifolds. For example it covers certain manifolds with multiple ends with non-negative Ricci curvature.  This class of manifolds contains much more variety than those with nonnegative Ricci curvature everywhere and has been studied comprehensively in \cite{LT87} by Li and Tam and other authors. For example, it was proven in \cite{LT87} that the space of positive harmonic manifolds is of finite dimension if the sectional curvature is non-negative outside a compact set. But the case of sign changing harmonic functions is largely open. Quasi monotonicity  of spherical $L^2$ norms of harmonic functions plays a crucial role in addition to the methods in \cite{Li97} and \cite{CM98}. Combining with Theorem \ref{thm-2}, we also obtain additional finite dimensionality results for the space of ancient caloric functions $\mathbf{K}^d$.

\begin{thm}
\label{thm-ys}
Let $(M, g)$ be an $n$-dimensional Riemannian manifold satisfying the following conditions.

(a). polynomial volume bound: for a constant $\mu>0$,
\begin{equation*}
\left| B(p, r) \right| \leq C r^\mu
\end{equation*}
for some $p \in M$  and $r$ large.

(b). For a large positive number $R_0$  the annuli
\[
\mathbb{A}_R \equiv \{ x \, | \, R/2 < d(x, p) < 2 R \}, \qquad R>R_0
\]have at most a fixed number of connected components and the volume conditions
\[
|\mathbb{A}_R \cap E|< c_1 |B(x, R/4)|
\]hold for each connected component $E$ containing $x$;  moreover the mean value inequality  under scale $R/4$ hold, i.e. for $x \in \mathbb{A}_R$ and $ r \in (0, R/4]$, we have, for any harmonic function in $M$,
\[
u^2(x) \le \frac{c_2}{|B(x, r)|} \int_{B(x, r)} u^2(y) dy.
\]

(c). For $r=d(x, p)>R_0 \ge e$ and positive constants $c_3$ and $\epsilon$,  $\Delta r \ge - \frac{c_3}{r (\ln r)^{1+\epsilon} }$ whenever $r$ is smooth, and  the geodesic flow of minimum geodesics from $p$ is onto from $\partial B(p, r_1)$ to $\partial B(p, r_2)$ for all $r_2>r_1>R_0$.

Then for $\mathcal{H}^d(M):= \{ u \, | \, \D u = 0,  |u(x)| \leq C(1+d(p,  x))^d \}$, we have
\[
 \dim  \mathcal{H}^d(M)<\infty.
\]
\end{thm}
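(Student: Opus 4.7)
The plan is to pair the lower bound in Proposition \ref{prcmli} with a matching $k$-free upper bound on $\sum_i \int_{B(p,R)\setminus B(p,R_0)} u_i^2$, built from the annular mean value inequality in (b) and a quasi-monotonicity of the spherical $L^2$ integrals of harmonic functions that follows from (c). This substitutes for the global volume doubling that drives the Li--Colding--Minicozzi argument of Proposition \ref{pr4.1}. Fix a $k$-dimensional subspace $K\subset\mathcal{H}^d(M)$, choose $\beta\ge 9$ and $\delta>0$ small, and apply Proposition \ref{prcmli} to produce $R>R_0$ and an $A_{\beta R}$-orthonormal basis $\{u_i\}$ of $K$ with $\sum_i \int_{B(p,R)\setminus B(p,R_0)} u_i^2\ge k\beta^{-(2d+\mu+\delta)}$.

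For any harmonic $u$, set $I_u(r)=\int_{\partial B(p,r)} u^2\,d\sigma$. By the coarea formula, $\Delta(u^2)=2|\nabla u|^2$ and the divergence theorem, $I_u'(r)=2\int_{B(p,r)}|\nabla u|^2+\int_{\partial B(p,r)} u^2\,\Delta r$, so condition (c)'s lower bound on $\Delta r$ and Gronwall yield $I_u(r_2)\ge c_0 I_u(r_1)$ for $r_2\ge r_1\ge R_0$, where $c_0>0$ is independent of $u,r_1,r_2$ because $\int_{R_0}^\infty dr/(r(\ln r)^{1+\epsilon})<\infty$ (this is precisely what the $(\ln r)^{1+\epsilon}$ factor in (c) provides). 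For the upper bound, fix $x\in B(p,R)\setminus B(p,4R_0)$; as in Proposition \ref{pr4.1}, an orthogonal change of basis gives $v\in K$ of unit $A_{\beta R}$-norm with $\sum_i u_i^2(x)=v^2(x)$. From $1=\int_{R_0}^{\beta R} I_v\,dr\ge c_0(\beta R/2)I_v(\beta R/2)$ we deduce $I_v(\beta R/2)\le 2/(c_0\beta R)$, and quasi-monotonicity propagates the bound backward to $I_v(r)\le 2/(c_0^2\beta R)$ for all $r\in[R_0,\beta R/2]$.

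Now decompose $B(p,R)\setminus B(p,4R_0)$ into disjoint annuli $A_l=\{2^l R_0\le d(\cdot,p)<2^{l+1}R_0\}$ with $R_l=2^l R_0$. For $x\in A_l$ the ball $B(x,R_l/4)$ lies in $\{3R_l/4\le d(\cdot,p)<9R_l/4\}\subset B(p,\beta R)\setminus B(p,R_0)$ (using $R_l\ge 4R_0$ and $\beta\ge 9$, $R_l\le R$), and condition (b) supplies the mean value inequality at scale $R_l/4$ together with $|\mathbb{A}_{R_l}|\le c_1|B(x,R_l/4)|$. Combining these with the coarea integral and the pointwise bound on $I_v$ yields
\[
\sum_i u_i^2(x)=v^2(x)\le\frac{c_2}{|B(x,R_l/4)|}\int_{3R_l/4}^{9R_l/4}I_v(r)\,dr\le\frac{C c_1 c_2 R_l}{c_0^2\beta R\,|\mathbb{A}_{R_l}|}.
\]
Integrating over $A_l$ (whose volume is at most $|\mathbb{A}_{R_l}|$) and summing the geometric series over $R_l=2^l R_0\le R$ gives $\sum_i\int_{B(p,R)\setminus B(p,4R_0)} u_i^2\le C'c_1 c_2/(c_0^2\beta)$, uniform in $k$ and $R$. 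The residual inner piece $\{R_0\le d\le 4R_0\}$ is a fixed compact annulus on which harmonic functions obey local elliptic estimates, contributing a further constant $C''=C''(R_0)$. Combining with the lower bound, $k\beta^{-(2d+\mu+\delta)}\le C'c_1 c_2/(c_0^2\beta)+C''$, so $\dim K\le C\beta^{2d+\mu+\delta}$ uniformly in $K$, which gives the theorem.

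The main obstacle is justifying the identity for $I_u'(r)$ across the cut locus of $p$, where $r$ is not smooth and $\Delta r$ is only defined distributionally. The surjectivity hypothesis on radial geodesics in (c) is tailored to exactly this point: it ensures the level sets $\partial B(p,r)$ are radially well-behaved and lets one either work on the smooth full-measure portion of $\partial B(p,r)$ with $\Delta r$ interpreted in the barrier/Cheeger--Colding sense, or approximate by smooth radial exhaustions and pass to the limit. Once this is settled, the rest of the argument is a clean dyadic accounting that poses no further difficulty.
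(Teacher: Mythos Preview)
Your overall strategy matches the paper's: Proposition~\ref{prcmli} for the lower bound, the rotation trick together with the annular mean value inequality of (b) for the upper bound, and quasi-monotonicity of the spherical $L^2$ integral $I_u(r)$ from condition (c) to link the two. Your derivation of $I_u'(r)$ and the Gronwall step are exactly what the paper does, and you correctly flag the cut-locus issue that the surjectivity clause in (c) is designed to handle.

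The execution, however, diverges. The paper takes $\beta=2$ and uses quasi-monotonicity \emph{first}, on each $u_i$ separately, to concentrate the mass in a single outer annulus:
\[
\int_{B(p,R)\setminus B(p,R_0)} u_i^{2}\;\le\;c_4\int_{B(p,R)\setminus B(p,0.75R)} u_i^{2}.
\]
Then for $x$ in $B(p,R)\setminus B(p,0.75R)\subset\mathbb{A}_R$ one applies the rotation trick and the mean value inequality at the single scale $R/4$, noting that $B(x,R/4)\subset\mathbb{A}_R\subset B(p,2R)\setminus B(p,R_0)$, so $\int_{B(x,R/4)}v^2\le\|v\|_{A_{2R}}^2=1$ directly. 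The volume hypothesis in (b) then finishes the bound. No dyadic decomposition, no pointwise bound on $I_v$, and no inner-piece term.

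Your route---a dyadic sum of annuli with a uniform pointwise bound $I_{v_x}(r)\le C/(\beta R)$---also works for the main part, but your treatment of the residual piece $\{R_0\le d\le 4R_0\}$ is a genuine gap. ``Local elliptic estimates'' do not give what you need: the orthonormality is only on $B(p,\beta R)\setminus B(p,R_0)$, so you have no control of $v_x$ on $B(p,R_0)$, and for $x$ near $\partial B(p,R_0)$ every mean-value ball dips into that region. The constant $C''$ you claim cannot be made independent of $k$ by local regularity alone. The correct fix is the paper's device applied to each $u_i$: quasi-monotonicity gives
\[
\sum_i\int_{R_0}^{4R_0} I_{u_i}\;\le\;\frac{3}{4c_0}\sum_i\int_{4R_0}^{8R_0} I_{u_i},
\]
and the right side is one term of your dyadic sum, already bounded. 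Even simpler is to adopt the paper's single-annulus reduction from the start, which eliminates both the dyadic bookkeeping and the inner-piece issue.
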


\proof  Let $K$ be any $k$ dimensional subspace of $\mathcal{H}^d(M)$. We use Proposition \ref{prcmli} with $\beta=2$. Then for some $R=R(k)>R_0$, we have
\[
k \le 2^{2d+ \mu+ \d } \sum_{i=1}^k \int_{B(p, R) \backslash B(p, R_0)} u_i^{2} dx.
\] Here $\{ u_i \}$ is an orthonormal basis for $K$ in $L^2(B(p, 2 R) \backslash B(p, R_0))$.
 If $u$ is harmonic, it is well known that the spherical integrals $H(r) \equiv \int_{\partial B(p, r)} u^2 dS$ satisfies, a.e.
\[
H'(r)=2 \int_{B(p, r)} |\nabla u|^2 dx + \int_{\partial B(p, r)} u^2 \Delta r dS,
\] where the identity $\partial_r  \ln \sqrt{\det g} = \Delta r$ a.e. in the geodesic polar coordinates is used.   By Condition (c),
\[
H'(r) \ge - \frac{c_3}{r (\ln r)^{1+\epsilon}} H(r), \quad a.e., \quad r>R_0,
\]Therefore
For $r_2>r_1 \ge R_0$, we have
\begin{equation}
\label{hr1<hr2}
H(r_1) \le H(r_2) e^{(c_3/\epsilon)  [(\ln r_1)^{-\epsilon} - (\ln r_2)^{-\epsilon}]} \le H(r_2) e^{c_3/\epsilon}.
\end{equation}

 Due to the possible presence of cut-locus, we need to make the above argument rigorous.
Pick $r_2>r_1>R_0$. We have
\[
H(r_2)-H(r_1)=\int_{\mathbb{S}^{n-1}} \left( u^2(r_2, \phi) \sqrt{\det g(r_2, \phi)}-u^2(r_1, \phi) \sqrt{\det g(r_1, \phi)} \right) d\mathbb{S}^{n-1},
\]where $\phi \in \mathbb{S}^{n-1}$, the unit $n-1$ sphere. This implies
\begin{equation}
\label{h2-h1}
\begin{aligned}
H(r_2)-H(r_1)&=\int_{\mathbb{S}^{n-1}} \left( u^2(r_2, \phi) -u^2(r_1, \phi) \right) \sqrt{\det g(r_2, \phi)}  d\mathbb{S}^{n-1}\\
 &\qquad + \int_{\mathbb{S}^{n-1}} u^2(r_1, \phi) \left( \sqrt{\det g(r_2, \phi)}  -
 \sqrt{\det g(r_1, \phi)} \right) d\mathbb{S}^{n-1}.
\end{aligned}
\end{equation} By Condition (c), in the geodesic polar coordinates, the points $(r_2, \phi),  (r_1, \phi)$ can be joined by a minimum geodesic. Since the cut-locus with respect to the origin is star shaped, only $(r_2, \phi)$ in the geodesic can be in the cut-locus. For $r \in [r_1, r_2)$, we can
use Condition (c) and the relation $\partial_r  \ln \sqrt{\det g} = \Delta r \ge - \frac{c_3}{r (\ln r)^{1+\epsilon} }$ classically to deduce,
\[
\det g(r, \phi) \ge \det g(r_1, \phi) e^{-(c_3/\epsilon)  [(\ln r_1)^{-\epsilon} - (\ln r)^{-\epsilon}]}.
\] Letting $r \to r_2$, we conclude
\[
\det g(r_2, \phi) \ge \det g(r_1, \phi) e^{-(c_3/\epsilon)  [(\ln r_1)^{-\epsilon} - (\ln r_2)^{-\epsilon}]}.
\]
Note due to the quasi-monotonicity, even if $(r_2, \phi)$ is in the cut-locus, we still can define $\det g(r_2, \phi)$ to be the supremum of the limits of $g(r, \cdot)$ along all possible minimum geodesics. Substituting this to the last integral in \eqref{h2-h1}, we see that
\[
H(r_2)-H(r_1) \ge \int_{\mathbb{S}^{n-1}} \left( u^2(r_2, \phi) -u^2(r_1, \phi) \right) \sqrt{\det g(r_2, \phi)}  d\mathbb{S}^{n-1} + e^{-(c_3/\epsilon)  [(\ln r_1)^{-\epsilon} - (\ln r_2)^{-\epsilon}]} H(r_1).
\]This shows
\[
H'(r) \ge 2 \int_{\partial B(p, r)} u \partial_r u dS - \frac{c_3}{r (\ln r)^{1+\epsilon}} H(r) \ge  - \frac{c_3}{r (\ln r)^{1+\epsilon}} H(r),
\] validating \eqref{hr1<hr2}.

Therefore, for some $c_4 \ge 1$,
\begin{equation}
\label{u2r0.75}
\int_{B(p, R)\backslash B(p, R_0)} u_i^{2} dx \le c_4 \int_{B(p, R)\backslash B(p, 0.75 R)} u_i^{2} dx
\end{equation} and hence
\begin{equation}
\label{k<}
k \le c_4 2^{2d+ \mu+ \d } \sum_{i=1}^k \int_{B(p, R)\backslash B(p, 0.75 R)} u_i^{2} dx.
\end{equation}
Since $B(p, R)\backslash B(p, 0.75 R)$ is a proper sub-domain of the annulus $\mathbb{A}_R$. We can use the method of rotation and mean value inequality as in Proposition \ref{pr4.1} with $m_0=\alpha=0$, which originally appeared in \cite{Li97} and \cite{CM98}.

Fix $x \in B(p, R)\backslash B(p, 0.75 R)$,  as before,  $K_x:= \left\{ u\in K | u(x) = 0 \right\}$  is at most codimension 1.  Therefore there exists an orthonormal transformation mapping $\{u_i\}_{i=1}^k$ to $\{v_i\}_{i=1}^k$, where $v_i \in K_x, i\geq 2$.

By condition (b), we can use the mean value inequality with radius $r=R/4$ to deduce
\begin{equation}\label{eqn-L122}\aligned
\sum_{i=1}^k u_{i}^2(x) = \sum_{i=1}^k v_{i}^2(x) = v_1^{2}(x) \leq c_2 \fint_{B(x, R/4)} v_1^{2} dy.
\endaligned
\end{equation}
By the triangle inequality, for any $x \in B(p, R)\backslash B(p, 0.75 R)$, we have
\[
B(x, R/4) \subset \mathbb{A}_R \subset B(p, 2 R).
\]
Therefore we can integrate to deduce
 \begin{equation}\label{eqn-L182}\aligned
& \sum_{i=1}^k \int_{B(p, R)\backslash B(p, 0.75 R)} u_{i}^2(x)dx \\
\leq & C c_2 \int_{B(p, R)\backslash B(p, 0.75 R)} \left| B(x, R/4) \right|^{-1}   \sup_{u\in \la A, U \ra}\int_{B(p, 2 R) \backslash B(p, R_0)} u^2 dy dx\\
  \le & C c_2 \int_{B(p, R)\backslash B(p, 0.75 R)} \left| B(x, R/4) \right|^{-1}  dx \\
  = &C c_2 \sum_{j} \int_{[B(p, R)\backslash B(p, 0.75 R)] \cap E_j} \left| B(x, R/4) \right|^{-1}  dx
  \le C c_1 c_2 \cdot \text{number of ends},
\endaligned
\end{equation}  where $E_j$ are the connected components of the annulus. In the above we have used the fact that $\{ u_i \}$ is an orthonormal basis in $L^2(B(p, 2 R)\backslash B(p, R_0))$ and the volume condition in (b). Substituting this to the right hand side of \eqref{k<}, we finish the proof.
\qed
\medskip

 Concerning Condition (c), we mention the condition $\Delta r \ge 0$  whenever $r=d(x, p) (\ge R_0)$ is smooth holds if the Ricci curvature is non-negative outside a compact set and there is no conjugate point along the ray of infinite length containing $p$ and $x$, which are true in a flat cylinder or Euclidean space e.g.. See the proof in the Corollary below, using the fact that a ray without conjugate point is distance minimizing in a tubular neighborhood. The condition on the geodesic flows is satisfied if the ends of the manifold are Euclidean e.g., since beyond a compact set, geodesics become straight lines. So all conditions of the theorem are satisfied if the ends of the manifold are Euclidean. Therefore, we have proven:

 \begin{cor}

Let $(M, g)$ be a complete Riemannian manifold with finitely many ends which are Euclidean domains, then
\[
\dim \mathcal{H}^d(M)<\infty.
\]
\end{cor}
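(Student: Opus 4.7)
The plan is to reduce the claim to Theorem \ref{thm-ys} by verifying its three hypotheses for any manifold whose ends are Euclidean. By assumption there is a compact $K\subset M$ such that $M\setminus K$ is a finite disjoint union of ends $E_1,\dots,E_N$, each isometric to the complement of a smoothly bounded domain in $\R^n$. Fix a basepoint $p\in K$ and choose $R_0$ large enough that $K\subset B(p,R_0/2)$, so that every geodesic ball of radius $\le R/4$ centered in $\mathbb{A}_R$ sits inside a single Euclidean end as soon as $R>4R_0$.

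For (a), each end contributes Euclidean volume $O(r^n)$ to $B(p,r)$ and the core is bounded, so $|B(p,r)|\le Cr^n$, giving $\mu=n$. For (b), if $x\in\mathbb{A}_R$ with $R>4R_0$, then $B(x,R/4)$ is an honest Euclidean ball in the end containing $x$, so $|B(x,R/4)|\sim R^n$; summing the contributions of the $N$ ends yields $|\mathbb{A}_R|\sim N R^n$, hence $|\mathbb{A}_R|\le c_1 |B(x,R/4)|$. The mean value inequality for $u$ harmonic on $M$ at any scale $r\le R/4$ centered at such $x$ reduces to the standard Euclidean mean value property applied on $B(x,r)\subset E_j$.

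For (c), pick any smooth point $x$ of $r=d(\cdot,p)$ with $r(x)>R_0$, and let $\gamma$ be a minimum geodesic from $p$ to $x$. Past the last time $\gamma$ meets $\partial K$ it is a straight line in some end $E_j$ and extends there to an infinite ray. A Euclidean ray has no conjugate points, so $\gamma$ remains minimizing in a tubular neighborhood, $r$ is smooth there, and a direct Euclidean computation yields $\Delta r=(n-1)/r>0\ge -c_3/(r(\ln r)^{1+\epsilon})$. For the surjectivity of the geodesic flow: given $y_2\in\partial B(p,r_2)$, take any minimum geodesic $\eta$ from $p$ to $y_2$ and let $y_1$ be the point on $\eta$ at arc-length $r_1$; then $d(p,y_1)\le r_1$ and, since $r_2=d(p,y_2)\le d(p,y_1)+d(y_1,y_2)\le d(p,y_1)+(r_2-r_1)$, we also have $d(p,y_1)\ge r_1$, so $y_1\in\partial B(p,r_1)$ flows along $\eta$ to $y_2$.

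The only delicate point is establishing $\Delta r\ge 0$ at every smooth $x$ with $r(x)>R_0$, since the minimum geodesic from $p$ to $x$ may traverse the core $K$ whose geometry is arbitrary. The resolution is the standard fact that a ray without conjugate points is distance minimizing in a tubular neighborhood: once the geodesic exits $K$ into the Euclidean end, the absence of conjugate points along the ray is automatic, and the computation of $\Delta r$ reduces to the flat one. With (a), (b), (c) established, Theorem \ref{thm-ys} yields $\dim \mathcal{H}^d(M)<\infty$.
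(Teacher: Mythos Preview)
Your overall strategy—verifying (a), (b), (c) of Theorem~\ref{thm-ys}—matches the paper, and your handling of (a), (b), and the surjectivity clause of (c) is fine. The gap is in your argument for $\Delta r\ge 0$.

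You assert that in the Euclidean end ``a direct Euclidean computation yields $\Delta r=(n-1)/r$.'' This is false. The Laplacian there is indeed the flat one, but the function $r=d(p,\cdot)$ is \emph{not} the Euclidean distance from any point of $E_j$: its level sets are not round spheres. The Jacobi fields along $\gamma$ with $J(0)=0$ enter $E_j$ as affine functions $J(t)=J(t_0)+(t-t_0)J'(t_0)$ whose initial data $J(t_0),\,J'(t_0)$ record the core geometry, so the shape operator of $\{r=\mathrm{const}\}$ is not $r^{-1}I$. For a concrete instance, take two copies of $\R^n$ joined by a neck, with $p$ in one copy at distance $d_0$ from the neck; far in the other copy $r(x)\approx d_0+|x-x_0|$ for the neck exit point $x_0$, giving $\Delta r\approx (n-1)/(r-d_0)\ne (n-1)/r$. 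Your phrase ``the computation of $\Delta r$ reduces to the flat one'' conflates the flatness of the ambient Laplacian with $r$ being a flat distance function.

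The conclusion $\Delta r\ge 0$ does hold, but it needs a genuine argument. The paper supplies one (in the proof of the subsequent corollary, to which the text just before this statement refers): extend $\gamma$ past $x$ to a point $q$ on the same Euclidean ray and set $f(y)=d(p,y)+d(y,q)$. Since $x$ lies on a minimizing geodesic from $p$ to $q$, $f$ is minimized at $x$, hence $\Delta f(x)\ge 0$. Near $x$ the function $d(\cdot,q)$ \emph{is} the Euclidean distance from $q$ (the segment $[x,q]$ lies in the flat end and the ray has no conjugate points, so it is minimizing in a tubular neighborhood), whence $\Delta_x d(x,q)=(n-1)/d(x,q)$ and $\Delta r(x)\ge -(n-1)/d(x,q)\to 0$ as $q\to\infty$ along the ray. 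An alternative repair, closer in spirit to what you wrote, is to use the matrix Riccati equation $S'=-S^2$ for the shape operator in the flat region: the existence of the infinite ray without conjugate points forces $I+(t-t_0)S(t_0)$ to be invertible for all $t>t_0$, hence $S(t_0)\ge 0$, and then $S(t)=(I+(t-t_0)S(t_0))^{-1}S(t_0)\ge 0$, so $\Delta r=\mathrm{tr}\,S\ge 0$.
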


As mentioned, the point is that no condition is imposed on the compact part of the manifold.
In addition, by examining the proof of Theorem \ref{thm-ys}, we see that the same conclusion holds for the space of harmonic functions on exterior domains with Dirichlet and Neumann boundary condition.

 \begin{cor}

Let $(M, g)$ be a complete Riemannian manifold satisfying the same conditions as Theorem \ref{thm-ys}. Let $D_0 \subset M$ be a compact domain. Let $\mathcal{H}^d(M \backslash D_0)$ be the space of harmonic functions in $M \backslash D_0$ which have  growth rate of degree at most $d$ and which satisfy the Dirichlet or the Neumann boundary condition on $\partial D_0$. Then
\[
\dim \mathcal{H}^d(M \backslash D_0)<\infty.
\]
\end{cor}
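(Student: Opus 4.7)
The plan is to observe that the proof of Theorem~\ref{thm-ys} goes through essentially verbatim in this setting, once one enlarges the cutoff radius $R_0$ so that $D_0 \subset B(p, R_0)$. With this single adjustment, every ball, annulus, and spherical slice appearing in the argument lies in $M \setminus B(p, R_0) \subset M \setminus D_0$, where elements of $\mathcal{H}^d(M \setminus D_0)$ are genuine smooth harmonic functions. In particular the boundary conditions (Dirichlet or Neumann) on $\partial D_0$ are never touched, and the proof does not care which one is imposed.

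More precisely, let $K \subset \mathcal{H}^d(M \setminus D_0)$ be any finite-dimensional subspace of dimension $k$, and enlarge $R_0$ if necessary so that $D_0 \subset B(p, R_0)$ while conditions (b), (c) of Theorem~\ref{thm-ys} still hold for $r > R_0$. Proposition~\ref{prcmli} depends only on the polynomial growth estimate $|u(x)| \le C(1+d(p,x))^d$ and on integrating $u_i u_j$ over $B(p, \beta^J) \setminus B(p, R_0)$; the harmonic functions need only be defined on this annular region, which is indeed the case. So with $\beta = 2$ we obtain some $R = R(k) > R_0$ and an $L^2(B(p, 2R) \setminus B(p, R_0))$-orthonormal basis $\{u_i\}_{i=1}^k$ of $K$ satisfying
\[
k \le 2^{2d+\mu+\delta} \sum_{i=1}^k \int_{B(p, R) \setminus B(p, R_0)} u_i^2 \, dx.
\]

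Next, for each $u_i$ the spherical integral $H_i(r) = \int_{\partial B(p, r)} u_i^2 \, dS$ is well defined for every $r > R_0$, and the identity $\partial_r \ln \sqrt{\det g} = \Delta r$ together with Condition (c) yields the differential inequality $H_i'(r) \ge -\frac{c_3}{r (\ln r)^{1+\epsilon}} H_i(r)$ exactly as in Theorem~\ref{thm-ys}. The rigorous cut-locus treatment there is purely metric-geometric and is unaffected by the hole $D_0$. Integrating gives $\int_{B(p, R) \setminus B(p, R_0)} u_i^2 \le c_4 \int_{B(p, R) \setminus B(p, 0.75 R)} u_i^2$, hence
\[
k \le c_4 \, 2^{2d+\mu+\delta} \sum_{i=1}^k \int_{B(p, R) \setminus B(p, 0.75 R)} u_i^2 \, dx.
\]
Finally, for each $x \in B(p, R) \setminus B(p, 0.75 R) \subset \mathbb{A}_R$, the ball $B(x, R/4)$ lies inside $\mathbb{A}_R \subset M \setminus B(p, R_0) \subset M \setminus D_0$, so the mean value inequality in Condition (b) applies, and the codimension-one rotation argument from the proof of Theorem~\ref{thm-ys} produces the bound $k \le C c_1 c_2$, independently of $k$.

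The only content of the corollary is the observation that every analytical step in the proof of Theorem~\ref{thm-ys} is localized outside a large ball about $p$, so the excision of the compact set $D_0$ and the linear boundary condition imposed on $\partial D_0$ are invisible to the argument. There is no substantive obstacle; the bookkeeping reduces to verifying once that every integration domain used is contained in $M \setminus D_0$, which is automatic after enlarging $R_0$.
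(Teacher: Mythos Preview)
Your overall approach matches the paper's: the proof of Theorem~\ref{thm-ys} carries over once $R_0$ is enlarged so that $D_0 \subset B(p, R_0)$. However, your assertion that the boundary conditions on $\partial D_0$ are ``never touched'' and ``invisible to the argument'' is not quite right, and this is the one place where some care is actually needed.

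The quasi-monotonicity inequality $H'(r) \ge -\frac{c_3}{r(\ln r)^{1+\epsilon}} H(r)$ is obtained from
\[
H'(r) \ge 2 \int_{\partial B(p,r)} u \, \partial_r u \, dS \; - \; \frac{c_3}{r(\ln r)^{1+\epsilon}}\, H(r),
\]
and one must know that the flux term $\int_{\partial B(p,r)} u\, \partial_r u\, dS$ is nonnegative. In the full-manifold setting this follows from the divergence theorem on $B(p,r)$ and $\Delta u = 0$, giving $\int_{\partial B(p,r)} u\,\partial_r u\, dS = \int_{B(p,r)} |\nabla u|^2\, dx \ge 0$. When $u$ is only harmonic on $M \setminus D_0$, the divergence theorem on $B(p,r) \setminus D_0$ produces an additional boundary term $\int_{\partial D_0} u\, \partial_\nu u\, dS$, and it is precisely the Dirichlet ($u=0$) or Neumann ($\partial_\nu u = 0$) condition that kills this term. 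So the boundary condition is used exactly here --- and either of the two works for the same reason, which is why the corollary treats both cases simultaneously. Apart from this point your reduction is correct and coincides with the paper's (unstated) argument.
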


%\medskip

\begin{cor}
Let $M$ be a connected sum of $\mathbb{R}^n$ with $\mathbb{R}^{n-1} \times \mathbb{S}^1$, $n \ge 2$,  then
\[
\dim \mathcal{H}^d(M) <\infty.
\]
\end{cor}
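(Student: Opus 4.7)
The plan is to derive this corollary from Theorem \ref{thm-ys}, with a small adjustment to accommodate the mismatched volume growth of the two ends of $M$. Decompose $M = K \cup E_1 \cup E_2$, where $K$ is a compact neighborhood of the gluing region, $E_1$ is isometric outside $K$ to $\mathbb{R}^n$ minus a ball, and $E_2$ is isometric outside $K$ to $\mathbb{R}^{n-1} \times \mathbb{S}^1$ minus a ball. Fix the reference point $p \in K$. For $R$ larger than some $R_0$, both $\partial B(p, R)$ and the annulus $\mathbb{A}_R = \{x : R/2 < d(x, p) < 2R\}$ have two connected components, one in each end.

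Conditions (a) and (c) of Theorem \ref{thm-ys} are straightforward. For (a), both ends have polynomial volume growth, so $|B(p, r)| \leq C r^n$. For (c), each end is eventually flat, hence $\mathrm{Ric} \geq 0$ outside a compact set; minimizing geodesics from $p$ run along radial rays in $E_1$ and along $\mathbb{R}^{n-1}$-directions in $E_2$, so $r = d(\cdot, p)$ is smooth off a measure-zero cut locus (contributed by the $\mathbb{S}^1$-factor in $E_2$). The Laplacian comparison theorem gives $\Delta r \geq 0$ wherever smooth, and the geodesic flow from $\partial B(p, r_1)$ to $\partial B(p, r_2)$ is surjective component-by-component. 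The mean-value inequality required in (b) at scale $R/4$ follows from the flat geometry of each end via the classical Moser-type argument.

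The main obstacle is the global volume bound $|\mathbb{A}_R| < c_1 |B(x, R/4)|$ in (b): if $x$ sits deep in $E_2$ then $|B(x, R/4)| \sim R^{n-1}$, while $|\mathbb{A}_R| \sim R^n$ is dominated by the $E_1$-part, so the literal form of the condition fails. I would bypass this by unfolding the proof of Theorem \ref{thm-ys} rather than invoking it as a black box. The crucial estimate \eqref{eqn-L182} is obtained by applying the mean-value inequality pointwise and integrating over $B(p, R) \setminus B(p, 0.75R)$. For $R \gg R_0$ this integration splits as a sum over the two end-components, and for each $x$ in a given component the ball $B(x, R/4)$ sits inside that same component. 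Hence the volume ratio $|\mathbb{A}_R \cap E_i|/|B(x, R/4)|$ is uniformly bounded on each piece — both are of order $R^n$ on $E_1$ and of order $R^{n-1}$ on $E_2$. The quasi-monotonicity of $H(r)$ in the proof of Theorem \ref{thm-ys} is likewise local to each end. Summing the contributions of the two components yields $k \leq C$ with $C$ independent of $k$, proving $\dim \mathcal{H}^d(M) < \infty$.
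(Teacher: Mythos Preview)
Your overall plan---unfold the proof of Theorem~\ref{thm-ys} and treat the two ends separately---is exactly what the paper does, and you correctly spot that the global volume comparison in condition~(b) fails across the two ends. However, your treatment of the cylindrical end $E_2=\mathbb{R}^{n-1}\times\mathbb{S}^1$ has a genuine gap.

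First, the sentence ``the Laplacian comparison theorem gives $\Delta r\ge 0$ wherever smooth'' is not right: Laplacian comparison with $\mathrm{Ric}\ge 0$ yields the \emph{upper} bound $\Delta r\le (n-1)/r$, not a lower bound. The paper obtains $\Delta r\ge 0$ on the Euclidean end by a separate trick: for $x$ on a ray $\gamma$ through $p$ and a far point $q\in\gamma$, the function $f(y)=d(p,y)+d(y,q)$ has an interior minimum at $x$, so $\Delta d(p,x)\ge -\Delta d(x,q)\ge -(n-1)/d(x,q)\to 0$ as $q\to\infty$.

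Second, and more seriously, you assert that the quasi-monotonicity of $H(r)=\int_{\partial B(p,r)}u^2\,dS$ ``is likewise local to each end'' and that condition~(c) holds there. The paper explicitly notes that this fails on $E_2$: the $\mathbb{S}^1$-factor produces a cut locus, and as $r$ grows the set of directions $\phi\in\mathbb{S}^{n-1}$ for which the geodesic from $p$ is still minimizing at distance $r$ shrinks. In the derivation of $H'(r)\ge -\frac{c_3}{r(\ln r)^{1+\varepsilon}}H(r)$ this shrinking domain contributes an uncontrolled negative boundary term, so the argument does not go through. The paper's remedy is to abandon geodesic polar coordinates on $E_2$ altogether and use the product coordinates $(x',\theta)$: writing $\Delta u=\Delta' u+\partial_\theta^2 u=0$ and exploiting the $2\pi$-periodicity in $\theta$, one computes directly that
\[
\partial_r\int_0^{2\pi}\!\!\int_{\partial B'(r)}u^2\,dS\,d\theta
\;\ge\; 2\int_0^{2\pi}\!\!\int_{B'(r)}|\partial_\theta u|^2\,dx'\,d\theta\;\ge\;0,
\]
where $B'(r)$ is the ball of radius $r$ in $\mathbb{R}^{n-1}$. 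This gives the needed replacement for \eqref{u2r0.75} on the small end (the paper's \eqref{u2r0.75s}), after which the rest of your component-wise argument does close.
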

\proof Let us pick the reference point $p$ in the smaller end which is a subset of $\mathbb{R}^{n-1} \times \mathbb{S}^1$.
After cutting off a sufficiently large ball $B(p, R_0)$, the annuli essentially become those in a manifold with zero Ricci curvature: $\mathbb{R}^{n}$ or $\mathbb{R}^{n-1} \times S^1$. In the latter case, the universal covers is $\mathbb{R}^n$ (periodic in one variable in PDE language). So the required volume condition and mean value inequality are valid. Now let us verify Condition (c) for the large end.   Let $\gamma$ be a ray starting from the reference point $p$, which goes through the large end $\mathbb{R}^{n}$ eventually.  Pick $x \in \gamma$ which is far away from $p$ so that $r$ is smooth. Note the Jacobi fields are linear in the large end.  Pick  another point $q \in \gamma$ which is even further from $p$. Consider the function $f=f(y) = d(p, y) + d(y, q)$,  $y \in M$. Since $f(x)$ is an interior minimum, we know that $\Delta f(x) \ge 0$, hence
\[
\Delta d(p, x) \ge -\Delta d(x, q) \ge - \frac{n-1}{d(x, q)}.
\] Here we have used the Laplace comparison theorem which is true since the Ricci curvature is non-negative outside a compact set of $M$ and $r=d(p, x)$ and $d(x, q)$  are smooth in the large end. Letting $q$ go to infinity along the ray $\gamma$, we deduce $\Delta d(p, x) \ge 0$.

For the smaller end, the above argument may fail due to the presence of cut locus. In fact it is not true that the exponential map is onto for each pair of spheres.  However by going over the proof of the theorem, it is clear that we just need to prove a slightly modified version of  \eqref{u2r0.75} on the smaller end, namely, for harmonic functions $u$ on $M$ and all large $R>0$,
\begin{equation}
\label{u2r0.75s}
\int_{[B(p, R)\backslash B(p, R_0)] \cap E_s} u^{2} dx \le c_4 \int_{[B(p, 1.1 R)\backslash B(p, 0.75 R)] \cap E_s} u^{2} dx,
\end{equation} where $E_s =\mathbb{R}^{n-1} \times \mathbb{S}^1$. Note the outer radius of the annulus on the righthand side is increased to $1.1 R$. Points $x$ in  $E_s$ can be denoted by a global coordinates $x=(x', \theta)$ where $x' \in \mathbb{R}^{n-1}$ and $\theta \in [0, 2\pi]$. For simplicity, we take the coordinate of the reference point $p$ to be $(0, 0)$, i.e. $x'=0$ and $\theta=0$. When $R>R_0$ and $R_0$ is sufficiently large, for any point $x \in [B(p, R)\backslash B(p, R_0)] \cap E_s$, we know by the triangle inequality that
\[
 |x'|-c_0 \le |x'|-2 \pi- 0.5 c_0 \le  d(x, p) \le |x'|+ 2 \pi + 0.5 c_0 \le |x'| + c_0
\]for a fixed constant $c_0$. Here $|x'|=\sqrt{x^2_1+...+x^2_{n-1}}$. Let us denote by $B'(\rho)$ to be the ball in $\mathbb{R}^{n-1}$ with radius $\rho>0$ and center $0$. Then the above inequalities imply
\[
[B(p, R)\backslash B(p, R_0)] \cap E_s \subset [B'(1.05 R)\backslash B'(0.95 R_0)] \times [0, 2 \pi]
\]which infers
\begin{equation}
\label{bpb'}
\int_{[B(p, R)\backslash B(p, R_0)] \cap E_s} u^{2} dx \le \int_{[B'(1.05 R)\backslash B'(0.95 R_0)] \times [0, 2 \pi]} u^{2} dx.
\end{equation} In the above $dx$ already becomes the Euclidean volume in $E_s$ and $u$ is a harmonic function on $E_s$, which is locally Euclidean.
 Notice that
\[
\begin{aligned}
&\partial_r \int_{\partial B'(r)} u^2 dS= \partial_r \int_{\mathbb{S}^{n-2}} u^2 r^{n-2} d\mathbb{S}^{n-2}=\int_{\mathbb{S}^{n-2}} 2 u \partial_r u r^{n-2} d\mathbb{S}^{n-2}
+\int_{\mathbb{S}^{n-2}} u^2 (n-2) r^{n-3} d\mathbb{S}^{n-2} \\
&\ge
 2 \int_{ B'(r)} u \Delta' u dx' + 2 \int_{ B'(r)} |\nabla' u|^2 dx'.
\end{aligned}
\]Here $\Delta'$ and $\nabla'$ are the Laplace operator and gradient on $\mathbb{R}^{n-1}$ respectively, and $\mathbb{S}^{n-2}$ is the standard unit $n-2$ sphere. Since $0=\Delta u =\Delta' u + \partial^2_\theta u$, the above inequalities imply
\[
\partial_r \int_{\partial B'(r)} u^2 dS \ge - 2 \int_{ B'(r)} u \partial^2_\theta u dx'.
\]This yields, since $u$ is periodic in $\theta$, that
\[
\partial_r \int^{2\pi}_0 \int_{\partial B'(r)} u^2 dS d\theta
\ge - 2 \int^{2\pi}_0 \int_{ B'(r)} u \partial^2_\theta u dx' d\theta =
2 \int^{2\pi}_0 \int_{ B'(r)} |\partial_\theta u|^2 dx' d\theta \ge 0.
\]Substituting this to the righthand side of \eqref{bpb'}, we find, for some constant $C \ge 1$ that
\[
\int_{[B(p, R)\backslash B(p, R_0)] \cap E_s} u^{2} dx \le C \int_{[B'(1.05 R)\backslash B'(0.8 R)] \times [0, 2 \pi]} u^{2} dx \le C \int_{[B(p, 1.1 R)\backslash B(p, 0.75 R)] \cap E_s} u^{2} dx.
\]Here we have used the fact that
\[
[B'(1.05 R)\backslash B'(0.8 R)] \times [0, 2 \pi] \subset [B(p, 1.1 R)\backslash B(p, 0.75 R)] \cap E_s
\]which was elucidated earlier using the triangle inequality. This proves \eqref{u2r0.75s} and the corollary.

\qed

It is clear that the conclusion of the corollary still hold if $M$ is the connected sum of $\mathbb{R}^n$ with finitely many cylinders.

We mention that P. Li in his Theorem 28.7 \cite{Lib} already obtained finite dimensionality results for $\mathcal{H}^d(M)$ under a weak mean value inequality: for some $\beta>1$ and harmonic $u$,
 \[
u^2(x) \le \frac{c_2}{|B(x, r)|} \int_{B(x, \beta r)} u^2(y) dy,
\]
and polynomial volume growth conditions.  But the weak mean value inequality is required on the whole manifold, which is not known to hold on the connected sums.  So his result is different from  Theorem \ref{thm-ys}, where the mean value inequality can also be replaced by the weak mean value inequality with suitable $\beta>1$, but only in a dyadic annulus.

\medskip

\noindent {\bf Acknowledgements}  F. H. L. is partially supported by NSF grants DMS-1955249 and DMS2247773;
 H.Q. is supported by NSFC (No. 12471050) and Hubei Provincial Natural Science Foundation of China (No. 2024AFB746); J.S. is supported by NSFC (Nos. 12071352, 12271039);
 Q.S.Z. gratefully acknowledges the support of Simons'
Foundation grant 710364. He would also like to thank Professors Alexander Grigor\'yan  and G. F. Wei for helpful communications.

\vskip24pt

% ------------------------------------------------------------------------
\end{document}